\numberwithin{equation}{section}   
\renewcommand{\r}{\mathbb{R}}
\newcommand{\tq}{\, \big| \, }
\DeclareMathOperator{\g}{g} 
\DeclareMathOperator{\Iso}{Iso}
\definecolor{NoteColor}{rgb}{1,0,0}
\newtheorem{theorem}{\rm\bf Theorem}[section]
\newtheorem{proposition}[theorem]{\rm\bf Proposition}
\newtheorem{lemma}[theorem]{\rm\bf Lemma}
\newtheorem{corollary}[theorem]{\rm\bf Corollary}
\theoremstyle{definition}
\newtheorem{definition}[theorem]{\rm\bf Definition}
\theoremstyle{remark}
\newtheorem{remark}[theorem]{\rm\bf Remark}
\newtheorem{example}[theorem]{\rm\bf Example}
\title{Weak Minkowski Spaces}
\date{today}
\author{Athanase Papadopoulos\thanks{The first author is partially supported by  the French ANR project FINSLER.} and Marc Troyanov} 
\address{A. Papadopoulos
 Institut de Recherche Mathematique Avanc\'ee, \\
 Universit\'e de Strasbourg and CNRS, \\
 7 rue Rene Descartes, 67084 Strasbourg Cedex, France.
\\ M. Troyanov, 
Section de Math\'ematiques,  \\
\'Ecole Polytechnique F{\'e}derale de Lausanne, 
1015 Lausanne - Switzerland}
\date{\today}
\begin{document}

\maketitle
 
\begin{abstract}
 We define the notion of weak Minkowski metric and prove some basic properties
of such metrics. We also highlight some of the important analogies between Minkowski geometry and the Funk and Hilbert geometries.

 \medskip
 
\noindent AMS classification:  51B20; 53C70; 53C22; 53C60.

 \medskip

\noindent Keywords:  Weak Minkowski space; Minkowski geometry; norm; Hilbert geometry; Funk geometry; weak norm; Mazur-Ulam theorem; Desarguesian space; Busemann G-space.

\end{abstract}

\tableofcontents

\section{Introduction}

In the last decade of the $19^{th}$ century, Hermann Minkowski initiated new geometric methods in number theory, which culminated
with the celebrated \emph{Geometrie der Zahlen}
 published in 1910 \cite{Minkowski}.  Minkowski's work  is referred to several times by David Hilbert in his 1900 ICM lecture \cite{Hilbert-Problems}, in particular in the introduction, where he declares:
 
\begin{quote}\small
The agreement between geometrical and arithmetical thought is shown also in that we do not habitually follow the chain of reasoning back to the axioms in arithmetical, any more than in geometrical discussions. On the contrary we apply, especially in first attacking a problem, a rapid, unconscious, not absolutely sure combination, trusting to a certain arithmetical feeling for the behavior of the arithmetical symbols, which we could dispense with as little in arithmetic as with the geometrical imagination in geometry. As an example of an arithmetical theory operating rigorously with geometrical ideas and signs, I may mention Minkowski's work, \emph{Die Geometrie der Zahlen}.
\end{quote}

Regarding the influence of this book on the birth of metric geometry, let us mention the following, from the paper \cite{BP1993} by Busemann and Phadke, p. 181: 

\begin{quote}\small
Busemann had read the beginning of Minkowski's \emph{Geometrie der Zahlen} in 1926 which convinced him of the importance of non-Riemannian metrics.
\end{quote}

 An early result of Minkowski in that theory, related to number theory,  states that  any convex domain in $\r^2$ which
is  symmetric around the origin and has area   greater than four contains at least one non-zero point with integer coordinates.  
One step in Minkowski's proof amounts to considering a metric on the plane for which the unit ball at any point in $\r^2$ (in fact in $\mathbb{Z}^2$) is
a translate of the initial convex domain. Such a metric is not Euclidean, it is translation invariant and the straight Euclidean lines are shortest paths. The geometric study of this type of metrics is called  (since Hilbert's writings) \emph{Minkowski Geometry}. We refer to \cite{Busemann1955,Day,Martini2001,Martini2007,Thompson} 
for general expositions of the subject. Minkowski formulated the basic principles of this geometry  in his 1896 paper \cite{Minkowski1896}, and these principles are recalled in Hilbert's lecture \cite{Hilbert-Problems}  (Problem IV).

An express description of Minkowski geometry is the following: choose a convex set $\Omega$ in $\r^n$ that contains the origin. For points $p$
and $q$ in $\mathbb{R}^n$, define a number $\delta >0$ as follows. First dilate $\Omega$ by the factor $\delta$ and then translate the set
in such a way that $0$ is sent to $p$ and $q$ lies on the boundary of the resulting set. In other word $\delta$ is defined by the condition
\begin{equation}\label{def.mk}
 q \in \partial (p + \delta \cdot \Omega).
\end{equation}
 
We denote by $\delta (p,q)$ the number defined in this way. The function $\delta : \r^n \times  \r^n \to \r_+$ is what we call a \emph{weak metric}.
It satisfies the triangle inequality and $\delta(p,p) = 0$. It is not symmetric in general and it can be degenerate in the sense that $\delta(p,q)=0$ does not imply $p\not= q$. On the other hand, the 
straight lines are geodesics for this metric and $\delta$ is translation invariant.
Minkowski geometry is the study of such weak metrics. It  plays an important role in convexity theory and in Finsler geometry where  Minkowski spaces play the role of flat spaces.

There is a vast literature on Minkowski metrics, and the goal of the present chapter is to provide for the reader of this handbook some  of the basic definitions and facts in the theory of weak Minkowski metrics, because of their relation to Hilbert geometry, and to give some examples. We give complete proofs of most of the stated results.  We end this chapter with  a  discussion about the relations and analogies between Minkowski geometry and Funk and Hilbert geometries.

\vspace{-0.2cm}

\section{Weak metric spaces}

We begin with the definition of a weak metric space.

\begin{definition}[Weak metric]
A {\it  weak metric}\index{weak metric}\index{metric!weak} on a set $X$ is a map $\delta:X\times X\to [0,\infty]$ satisfying the following two properties:
\begin{enumerate}
\item[a.)]  $\delta(x,x)=0$  for all $x$ in $X$;
\item[b.)]  $\delta ( x, y ) + \delta ( y, z ) \geq \delta ( x, z )$ for all $x$, $y$ and $z$ in $X$.
\end{enumerate}
\end{definition}
We often require a weak metric to satisfy somme additional properties. In particular one says that  
the weak metric $\delta$ on $X$ is
\begin{enumerate}
\item[c.)] {\it separating} if  $x \neq y$ implies $\delta ( x, y ) > 0$,
\item[d.)]  {\it weakly separating} if  $x \neq y$ implies $ \max{\{\delta(x,y),\delta(y,x)\}} > 0$,
\item[e.)]  {\it finite} if $\delta ( x, y ) < \infty$,
\item[f.)]  {\it reversible} (or {\it symmetric}) if $\delta (y,x) = \delta (x,y)$,
\item[g.)]  {\it quasi-reversible} if $\delta (y,x)\leq C \delta (x,y)$ for some constant $C$,
\end{enumerate}
for all $x$ and $y$ in $X$. 

One sometimes says that $\delta$ is {\it strongly separating} if condition (b)
holds, in order to stress the distinction with condition (d). Observe that for reversible metrics both notions of separation coincide.

\smallskip

A \emph{metric} in the classical sense is a reversible, finite and separating weak metric. Thus, it satisfies $$
 0 < \delta(x,y)=\delta(y,x)<\infty
$$
for all $x\not= y \in X$.

\begin{definition} \label{def.WPM} 
 Let $\mathcal{U} \subset X$ be a convex subset of a real vector space $X$. A weak metric $\delta$ in $\mathcal{U}$ is said to
 be \emph{projective}\index{projective weak metric}\index{weak metric!projective} (or \emph{projectively flat}){metric!projectively flat}{projectively flat!metric}  if satisfies the condition 
\begin{equation}\label{eq.projective}
 \delta ( x, y ) + \delta ( y, z ) = \delta ( x, z )
\end{equation}
whenever the three points $x$, $y$ and $z$ in $\mathcal{U}$ are aligned and $y \in [x,z]$, the affine segment from $x$ to $z$ (equivalently if $y = tx+(1-t)z$ for
some $0 \leq t \leq 1$).  The weak metric is \emph{strictly projective} if it is projective and 
$$
 \delta ( x, u ) + \delta ( u, z ) >  \delta ( x, z )
$$
whenever  $u \not\in [x,z]$.  
\end{definition}

\begin{example}\label{ex.PrFlat}
 A function $\varphi  : \r^n \to \r$ is said to be \emph{monotone increasing}  if whenever $x$ and $y$ in $\r^n$ satisfy 
 $x_j \leq y_j$ for $1 \leq j \leq n$ we have $\varphi (x) \leq \varphi (y)$. For any collection $\{ \varphi_a\}_{a\in A}$
 of monotone increasing functions defined on $\r^n$, the weak metric defined as
 $$
  \delta (x,y) = \sup_{a\in A} |\varphi(y) - \varphi(x)|
 $$
 is projective. A concrete example is given by
 $$
  \delta (x,y) = \max_{1 \leq j \leq n} |e^{y_j} - e^{x_j}|.
 $$
\end{example}

\begin{definition}[Weak Minkowski metric] \label{def.wm}
A {\it weak Minkowski metric}\index{weak Minkowski metric}\index{metric!weak Minkowski} on a real vector space $X$  is a weak metric $\delta$ on $X$ that is translation invariant and 
projective.
\end{definition} 

The weak metric in Example \ref{ex.PrFlat} is projectively flat, but in general it is not a weak Minkowski 
metric.

\begin{example}
 Let $X$ be a real vector space and $\varphi : X \to \r$ a linear form. Define 
 $\delta_{\varphi}(x,y) = \max\{0, \varphi (y-x)\}$. Then $\delta$ is a weak Minkowski metric. It is finite, but it is neither reversible nor weakly separating. 
\end{example}

\medskip

{\small 
In functional analysis, given a  real vector space $X$, the collection of sets 
$$
  B_{(\varphi,x,r)} = \{y \in X \tq \delta_{\varphi}(x,y) < r \} \subset X,
$$
where $x \in X$ is an arbitrary point, $r > 0$ and  $\varphi \in X^*$ is an arbitrary linear form generate a topology which is called the \emph{weak topology} 
on $X$. This observation is a possible justification for the name ``weak metric'' that we give to such functions. The terminology has its origin in the work of Ribeiro who was interested around 1943 \cite{Ribeiro} in some generalization of the Urysohn metrization theorem for the topology associated to $\delta$.
}

\medskip

\begin{example}[Counterexample]  Let $X$ be a real vector space and let $\| \, \|: X \to \r$ be a norm on $X$. 
Then $\delta (x,y) = \max\{\| y-x\|, 1\}$  is a metric that is translation invariant, but it is not a Minkowski metric because it is not projective. Indeed, suppose $\|z\| = 1$, then
$$
 \delta (0,2z) = 1 < \delta(0,z)+\delta(z,2z) = 2.
$$
\end{example}
 
In this example, the metric is ``projective for small distances'', in the sense that if $\|z-x\|\leq 1$ and $y\in [x,z]$, then (\ref{eq.projective}) holds. On the
other hand, large closed balls are not compact; in fact any ball of radius $\geq 1$ is equal to the whole space $X$.

\medskip

\begin{example}[Counterexample]  This is a variant of the previous example. Let again  $\| \, \|: X \to \r$ be a norm on the real vector space $X$.
Then $\rho_{\alpha} (x,y) =  \| y-x\|^{\alpha}$  is a metric if and only if $0 < \alpha \leq 1$. It is clearly  translation invariant, but it is not  projective
 if $\alpha < 1$, and thus it is not a Minkowski metric.
\end{example}
Unlike the previous metric $\delta$, the metric $\rho_{\alpha}$ is not projective for small distances (if $\alpha < 1$).  On the other hand, every closed ball
is compact.

\section{Weak Minkowski norms}

\begin{proposition}\label{proprieteF}
 Let $\delta$ be a weak Minkowski metric on some real vector space $X$ and set $F(x) = \delta (0,x)$.
 Then the function $F : X \to [0,\infty]$ satisfies the following properties
 \begin{enumerate}[i.)]
 	\item $F(x_1+x_2) \leq F(x_1)+ F(x_2)$ for all $x_1,x_2\in X $.
	\item  $F(\lambda x)=\lambda F(x)$ for all $x\in X$ and for all $\lambda\geq 0$.
\end{enumerate}
\end{proposition}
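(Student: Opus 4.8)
The plan is to derive both properties directly from the two defining features of a weak Minkowski metric: translation invariance and projectivity (equation \eqref{eq.projective}).

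For property (i), the triangle inequality, I would write $x_1 + x_2$ as the endpoint of a path through the intermediate point $x_1$. By the triangle inequality for weak metrics, $F(x_1+x_2) = \delta(0, x_1+x_2) \leq \delta(0,x_1) + \delta(x_1, x_1+x_2)$. The first term is $F(x_1)$ by definition. For the second term, translation invariance by the vector $-x_1$ gives $\delta(x_1, x_1+x_2) = \delta(0, x_2) = F(x_2)$, and we are done. This step is essentially immediate; the only thing to be careful about is that we are working in $[0,\infty]$, so the inequality is to be interpreted with the obvious conventions, but since all weak metrics take values in $[0,\infty]$ this causes no trouble.

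For property (ii), positive homogeneity, I would first treat the case $\lambda$ a nonnegative integer by induction using (i) applied to equal summands, but in fact the clean argument uses projectivity rather than just the triangle inequality. Given $\lambda \geq 0$, consider the aligned triple $0$, $\lambda x$, $x$ (assuming first $0 \le \lambda \le 1$, so that $\lambda x \in [0,x]$); then \eqref{eq.projective} gives $\delta(0,x) = \delta(0,\lambda x) + \delta(\lambda x, x)$, i.e. $F(x) = F(\lambda x) + \delta(\lambda x, x)$, and translation invariance turns $\delta(\lambda x, x)$ into $\delta(0, (1-\lambda)x) = F((1-\lambda)x)$. So we get the functional equation $F(x) = F(\lambda x) + F((1-\lambda)x)$ for $\lambda \in [0,1]$. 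Rather than solve this abstractly, the cleaner route is: for a positive integer $n$, projectivity along the collinear points $0, x, 2x, \dots, nx$ (each lying between its neighbors' appropriate endpoints, after translating) yields $F(nx) = nF(x)$; combined with this applied to $x/n$ in place of $x$ it gives $F((p/q)x) = (p/q)F(x)$ for all nonnegative rationals $p/q$. The remaining point is to pass from rational to arbitrary nonnegative real $\lambda$.

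The main obstacle is exactly this last passage, since a weak metric carries no a priori continuity. I would handle it by a monotonicity–squeeze argument: the functional equation $F(x) = F(\lambda x) + F((1-\lambda)x)$ for $\lambda \in [0,1]$, together with nonnegativity of $F$, forces $\lambda \mapsto F(\lambda x)$ to be monotone nondecreasing on $[0,1]$ (and, after rescaling, on all of $[0,\infty)$: if $0 \le \mu \le \lambda$ then $F(\mu x) \le F(\lambda x)$ because $\mu x \in [0, \lambda x]$ and projectivity gives $F(\lambda x) = F(\mu x) + \delta(\mu x, \lambda x) \ge F(\mu x)$). A monotone function that agrees with the linear function $t \mapsto tF(x)$ on the dense set of nonnegative rationals must equal it everywhere: for arbitrary $\lambda \ge 0$ and rationals $p < \lambda < q$ we get $pF(x) = F(px) \le F(\lambda x) \le F(qx) = qF(x)$, and letting $p \uparrow \lambda$, $q \downarrow \lambda$ pins down $F(\lambda x) = \lambda F(x)$. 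One should note this argument is valid even when $F(x) = \infty$, with the convention $\lambda \cdot \infty = \infty$ for $\lambda > 0$ and $0 \cdot \infty = 0$ (the latter matching $F(0) = \delta(0,0) = 0$); the monotonicity squeeze still works since the values $F(\lambda x)$ are then all $+\infty$ for $\lambda > 0$. This completes the proof.
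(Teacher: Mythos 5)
Your proof is correct and follows essentially the same route as the paper: property (i) via the triangle inequality through the intermediate point $x_1$ plus translation invariance, and property (ii) by deriving the additive functional equation $F(\lambda x)+F(\mu x)=F((\lambda+\mu)x)$ from projectivity and translation invariance, settling the rational case, and then using the monotonicity forced by nonnegativity to squeeze to all real $\lambda\geq 0$ (with the infinite case treated separately). The paper merely packages the last steps as a standalone lemma on additive functions $\r_+\to[0,\infty]$; the substance is identical.
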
 

\begin{proof}
The first property is a consequence of the triangle inequality together with the fact that $\delta$ is translation invariant:
 \begin{eqnarray*}
F(x+y) & = &  \delta (0,x+y)  
\\ &\leq &  \delta (0,x) +  \delta (x,x+y) \\
 & = & \delta (0,x) +  \delta (0,y)   \\
 & = & F(x) + F(y).
\end{eqnarray*}
To prove the second property, observe   for any  $x\in X$ and any $\lambda,   \mu \geq 0$ we have
$$
    \delta(0,\lambda x) + \delta(\lambda x,(\lambda+\mu) x) =    \delta(0,(\lambda+\mu) x),
$$
because $\lambda x$ belongs to the segment $[0, (\lambda+\mu) x]$. 
Since we have $\delta(\lambda x,(\lambda+\mu) x) = \delta(0,\mu x) =  F(\mu x)$, the previous identity can be written as
\begin{equation}\label{eq.pmon1}
   F(\lambda x) +  F(\mu x) = F((\mu +\lambda)x)
\end{equation}
and we conclude from the next lemma that $F(\lambda x)=\lambda F(x)$   for all $\lambda > 0$.
We also have $F(0\cdot x) = 0\cdot F(x) = 0$ since $F(0) = \delta (0,0) = 0$.
\end{proof}

\medskip

\begin{lemma}
 Let $f : \r_+ \to [0,\infty]$ be a function such that $f(\mu +\lambda) = f(\lambda) + f(\mu)$ for any $\lambda, \mu \in \r_+$,
 then 
 $$
   f(\lambda) = \lambda f(1)
 $$
 for every $\lambda >0$.
\end{lemma}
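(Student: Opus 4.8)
The plan is to recognize this as Cauchy's functional equation $f(\lambda+\mu)=f(\lambda)+f(\mu)$ and to replace the usual continuity hypothesis by the two features special to the present setting: the values of $f$ are non-negative, and they lie in the extended half-line $[0,\infty]$. The first thing I would record is that $f$ is non-decreasing on $(0,\infty)$: for $0<a\le b$ one has $b-a\in\mathbb{R}_+$, hence $f(b)=f(a+(b-a))=f(a)+f(b-a)\ge f(a)$ because $f(b-a)\ge 0$. This monotonicity is what excludes the pathological (non-measurable) additive functions and plays the role normally played by continuity.

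Next I would evaluate $f$ on the positive rationals. A one-line induction on $n$ gives $f(n\lambda)=n\,f(\lambda)$ for every $n\in\mathbb{N}$ and every $\lambda\in\mathbb{R}_+$ (only additions are used, so this is valid even in $[0,\infty]$). Taking $\lambda=1$ yields $f(n)=n\,f(1)$, and taking $\lambda=1/n$ yields $n\,f(1/n)=f(1)$. Combining these gives $f(p/q)=p\,f(1/q)=(p/q)\,f(1)$ for all positive rationals $p/q$, with the understanding that when $f(1)=\infty$ the relation $q\,f(1/q)=\infty$ forces $f(1/q)=\infty$, so that $f(q)=\infty$ for every positive rational $q$ in that case.

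Finally I would pass to an arbitrary real $\lambda>0$ by squeezing between rationals. If $f(1)<\infty$, choose rationals $q_1\le\lambda\le q_2$; monotonicity gives $q_1 f(1)=f(q_1)\le f(\lambda)\le f(q_2)=q_2 f(1)$, and letting $q_1\uparrow\lambda$ and $q_2\downarrow\lambda$ forces $f(\lambda)=\lambda f(1)$. If $f(1)=\infty$, then for any $\lambda>0$ pick $n$ with $1/n\le\lambda$; monotonicity gives $f(\lambda)\ge f(1/n)=\infty$, so $f(\lambda)=\infty=\lambda f(1)$. In all cases $f(\lambda)=\lambda f(1)$ for every $\lambda>0$, which is the claim.

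The only genuine difficulty I anticipate is bookkeeping around the value $+\infty$: since additive cancellation is not available in $[0,\infty]$, one cannot simply ``divide through'' by $f(1)$, and the case $f(1)=\infty$ has to be isolated and settled by monotonicity as above. Apart from that, this is the standard argument for Cauchy's functional equation under a monotonicity (here, positivity) side condition.
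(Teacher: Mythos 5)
Your proof is correct and follows essentially the same route as the paper's: induction gives $f(n\lambda)=nf(\lambda)$, hence the value on the rationals, and then an arbitrary $\lambda$ is squeezed between rationals using the fact that $f\ge 0$ makes $f$ non-decreasing, with the value $+\infty$ handled as a separate case. The only cosmetic difference is that you isolate monotonicity as an explicit preliminary step and split on whether $f(1)=\infty$, whereas the paper runs the squeeze directly from the additive identity and splits on whether $f$ is finite everywhere; the content is the same.
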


\begin{proof} We first assume $f(a) < \infty$ for every $a\in \r_+$. We have by hypothesis 
$$f(k\cdot a) = f(((k-1)+1)\cdot a) =  f((k-1)\cdot a) + f(a)$$
for any $k \in \mathbb{N} $.
We thus have by induction
$$
 f(k\cdot a) = k\cdot  f(a)
$$
for any $k \in \mathbb{N}$ and any $a\in \r_+$. 
Using the above identity with  $k,m\in \mathbb{N}$, we have
$$
 m \cdot f\left(\frac{k}{m}\right) =  f\left(m \frac{k}{m}\right) = f(k) =  k\cdot  f(1).
$$ 
Dividing this identity by $m$ we obtain 
$f(\alpha) = \alpha f(1)$ for any $\alpha \in \mathbb{Q}_+$.
Consider now  $\lambda \in \r_+$ arbitrary, and choose $\alpha_1, \alpha_2 \in \mathbb{Q}_+$ such that
$\alpha_1 < \lambda < \alpha_2$. Then 
$$
 f(\lambda) = f(\alpha_1) + f(\lambda - \alpha_1)> f(\alpha_1) =  \alpha_1f(1)
$$
and 
$$
    f(\lambda)  = f(\alpha_2)-f(\alpha_2 - \lambda) <f(\alpha_2)=\alpha_2f(1).
$$
Since $\alpha_2-\alpha_1 >0$ is arbitrarily small, we deduce that $ f(\lambda)  =  \lambda  f(1)$ for any $\lambda >0$.

\smallskip

So far we have assumed $f(a) < \infty$ for any $a>0$. Assume now there exists $a> 0$ such that $f(a) = \infty$. Then $f(\lambda)  =\infty$   for any $\lambda >0$. Indeed  choose an integer $k$
such that $k\lambda > a$. Then 
$$
   k f(\lambda) = f(k\lambda) = f(k\lambda - a) + f(a) \geq  f(a) = \infty.
$$
Therefore $f(\lambda) = f(1) = \infty$.
\end{proof}

\bigskip

\begin{definition} \label{def.WMN}
A function $F : X \to [0,\infty]$ defined on a real vector space $X$ is a \emph{weak Minkowski norm}
if the following two conditions hold:
 \begin{enumerate}[i.)]
 	\item $F(x_1+x_2) \leq F(x_1)+ F(x_2)$ for all $x_1,x_2\in X $.
	\item  $F(\lambda x)=\lambda F(x)$ for all $x\in X$ and for all $\lambda\geq 0$.
\end{enumerate}
\end{definition}

Proposition \ref{proprieteF} states that a weak Minkowski metric determines a weak Minkowski norm. 
Conversely, a weak Minkowski 
norm defines a weak Minkowski metric  $\delta_F$ by the formula 
\begin{equation}\label{eq.deltaF}
 \delta_F (x,y) = F(y-x).
\end{equation}

We then naturally define a weak Minkowski $F$ norm to be
\begin{enumerate}[\ $\circ$]
  \item \emph{separating} if $x\neq 0$ implies $F(x) > 0$;
  \item \emph{weakly separating} if $x\neq 0$ implies $\max\{F(x),F(-x)\} > 0$;
  \item \emph{finite} if $F(x) < \infty$;
  \item \emph{reversible} (or \emph{symmetric}) if $F(-x) = F(x)$;
\end{enumerate}
for any $x\in X$.
 
\medskip

\begin{example}
The function $F : \r^2 \to [0, \infty]$ defined by $F(x_1,x_2) = \max\{x_1,0\}$ if $x_2=0$ and $F(x_1,x_2) =\infty$ if $x_2\neq 0$ is a weak Minkowski norm which is neither finite, nor separating, nor symmetric. It is however weakly separating.
\end{example} 
 
 \medskip
 
Observe that in this example $F$ is finite on some vector subspace of $\r^2$. This is a general fact:
 
\begin{proposition}\label{prop.continuity}
Let  $F : X \to [0,\infty]$ be a {weak Minkowski norm} on the real vector space $X$ and set $D_F = \{x\in X : F(x) < \infty\}$.
Then $D_F$ is a vector subspace of $X$.
Furthermore,  the restriction of $F$ to any finite-dimensional subspace  $E \subset D_F$ is continuous.
\end{proposition}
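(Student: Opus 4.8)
The plan is to prove the two assertions separately. For the first, I would argue that $D_F$ is a subspace purely from the two defining properties of a weak Minkowski norm (Definition~\ref{def.WMN}): one has $0\in D_F$ since $F(0)=0$; property~(i) gives $F(x_1+x_2)\le F(x_1)+F(x_2)$, so $D_F$ is closed under addition; property~(ii) gives $F(\lambda x)=\lambda F(x)$ for $\lambda\ge 0$, so $D_F$ is closed under multiplication by nonnegative scalars; and since $D_F$ is stable under $x\mapsto -x$ as well, it is stable under multiplication by every real scalar, hence a linear subspace of $X$.

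For the continuity of $F$ on a finite-dimensional subspace $E\subseteq D_F$, I would reduce everything to an a priori linear bound. Fix a basis $e_1,\dots,e_d$ of $E$, let $\|\cdot\|$ be the $\ell^1$-norm it determines, $\|\sum_{j=1}^d t_je_j\|=\sum_{j=1}^d|t_j|$, and set $C=\max_{1\le j\le d}\max\{F(e_j),F(-e_j)\}$, which is finite precisely because $\pm e_j\in E\subseteq D_F$. Then for $x=\sum_{j=1}^d t_je_j\in E$, subadditivity and positive homogeneity yield
$$
 F(x)\ \le\ \sum_{j=1}^d F(t_je_j)\ =\ \sum_{j=1}^d |t_j|\,F(\varepsilon_j e_j)\ \le\ C\sum_{j=1}^d |t_j|\ =\ C\,\|x\|,
$$
where $\varepsilon_j\in\{+1,-1\}$ denotes the sign of $t_j$ and the middle equality uses $F(t_je_j)=F(|t_j|\,\varepsilon_j e_j)=|t_j|\,F(\varepsilon_j e_j)$.

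With this bound available, I would apply subadditivity once more to the decompositions $x=y+(x-y)$ and $y=x+(y-x)$ to get, for all $x,y\in E$,
$$
 F(x)-F(y)\le F(x-y)\le C\|x-y\|\qquad\text{and}\qquad F(y)-F(x)\le F(y-x)\le C\|x-y\|,
$$
hence $|F(x)-F(y)|\le C\|x-y\|$; so $F|_E$ is $C$-Lipschitz for $\|\cdot\|$ and in particular continuous, and since all norms on the finite-dimensional space $E$ induce the same topology this is continuity for the standard topology of $E$. I do not expect a real obstacle here: the only point requiring care is that $F$ need not be symmetric, so the constant $C$ must control both $F(e_j)$ and $F(-e_j)$ — which is exactly why the natural hypothesis is that $E$ be a subspace contained in $D_F$, not merely that $F$ be finite on $E$. (Alternatively, one could observe that subadditivity together with positive homogeneity makes $F|_E$ convex, and invoke the classical fact that a finite convex function on an open subset of a finite-dimensional space is locally Lipschitz.)
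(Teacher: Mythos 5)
Your treatment of the continuity assertion is essentially the paper's own argument: the same reduction to a basis of $E$, the same constant controlling both $F(e_j)$ and $F(-e_j)$ (the paper takes $C=\max_j\bigl(F(e_j)+F(-e_j)\bigr)$, you take the maximum of the two values, which changes nothing), and the same linear bound $F(x)\le C\sum_j|t_j|$. You then conclude via the two-sided inequality $|F(x)-F(y)|\le C\|x-y\|$, i.e.\ Lipschitz continuity, where the paper instead runs a $\limsup$/$\liminf$ argument along a convergent sequence; your packaging is slightly cleaner and gives a marginally stronger conclusion, but it is the same proof. Your remark that the asymmetry of $F$ forces the constant to control both $F(e_j)$ and $F(-e_j)$ is exactly the point of the paper's choice of $C$.

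The first assertion is where there is a real gap, and it is instructive because the paper's own proof shares it. You assert that ``$D_F$ is stable under $x\mapsto-x$'' with no argument, and in fact no argument exists: conditions (i) and (ii) of Definition~\ref{def.WMN} involve only nonnegative scalars, and they imply only that $D_F$ is a convex cone. Concretely, on $X=\mathbb{R}$ the function $F(x)=x$ for $x\ge0$ and $F(x)=\infty$ for $x<0$ satisfies (i) and (ii) (and even arises from a genuine weak Minkowski metric via $\delta(x,y)=F(y-x)$, which is translation invariant and projective), yet $D_F=[0,\infty)$ is not a subspace. The paper's proof checks closure under addition and under multiplication by $\lambda\ge0$ and then declares the assertion proved, so it establishes no more than the cone property either. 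As stated, the first claim therefore needs either to be weakened to ``$D_F$ is a convex cone'' (which is all the continuity statement requires, since that statement concerns subspaces $E$ contained in $D_F$) or to be supplemented by a hypothesis guaranteeing $F(-x)<\infty$ whenever $F(x)<\infty$. Your proof at least makes the missing step explicit, but it does not close it.
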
 
 
\begin{proof}  
If $x,y \in D_F$, then $F(x)$ and $F(y)$ are finite and therefore $F(x+y) \leq F(x) + F(y) < \infty$
and $F(\lambda x) = \lambda F(x) < \infty$. Therefore $x+y \in D_F$ and $\lambda x \in D_F$, which 
proves the first assertion.

To prove the second part, we consider a finite-dimensional subspace  $E \subset D_F$
and we choose a basis $e_1,e_2,\dots, e_m \in E$. Define the constant
$$
  C = \max_{1 \leq j \leq m} (F(e_j) + F(-e_j)).
$$
For an arbitrary vector $x = \sum_{j=1}^m \alpha_j e_j \in E$ we then have
$$
 F(x) \leq  \sum_{j=1}^m F(\alpha_j e_j)  \leq C  \cdot \sum_{j=1}^m |\alpha_j |.
$$
In particular if $x \to 0$, then $F(x) \to 0$. More generally, if a sequence $x_{\nu} \in E$ converges to
some $a \in E$, then 
$$
 \limsup_{\nu \to \infty} F(x_{\nu}) =  \limsup_{\nu \to \infty} F(a+(x_{\nu}-a)) \leq
 F(a) + \limsup_{\nu \to \infty} F((x_{\nu}-a)) = F(a).
$$
Since $F(a) \leq F(x_{\nu}) +  F(a-x_{\nu})$ we also have
$$F(a) \leq \liminf_{\nu \to \infty} (F(x_{\nu}) +   F(a-x_{\nu})) = \liminf_{\nu \to \infty} F(x_{\nu}).$$
It follows that
$$
  \limsup_{\nu \to \infty} F(x_{\nu})   \leq  F(a) \leq  \liminf_{\nu \to \infty} F(x_{\nu}),
$$
and the continuity on $E$ follows.
\end{proof}

\medskip 

\begin{corollary}\label{cor.semicontinuity}
 Any weak Minkowski norm on a finite-dimensional vector space $X$ is lower semi-continous.
\end{corollary}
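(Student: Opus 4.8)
The plan is to deduce lower semi-continuity from the closedness of the sublevel sets, using the continuity result of Proposition \ref{prop.continuity} as a black box. Recall that a function $F\colon X\to[0,\infty]$ on a topological space is lower semi-continuous precisely when, for every real number $c$, the sublevel set $S_c=\{x\in X : F(x)\le c\}$ is closed (equivalently, each set $\{F>c\}$ is open). So it suffices to check that every $S_c$ is closed.

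Since $X$ is finite-dimensional, the subspace $D_F=\{x\in X : F(x)<\infty\}$ given by Proposition \ref{prop.continuity} is a finite-dimensional linear subspace, hence closed in $X$ for its canonical topology, and the same proposition tells us that $F|_{D_F}$ is continuous (with values in $[0,\infty)$). Now fix $c\in\r$. If $c<0$ then $S_c=\emptyset$ is closed. If $c\ge 0$, then every point outside $D_F$ has $F=\infty>c$, so $S_c\subset D_F$ and $S_c=\{x\in D_F : F(x)\le c\}$ is the preimage of the closed set $[0,c]$ under the continuous map $F|_{D_F}$; thus $S_c$ is closed in $D_F$, and since $D_F$ is closed in $X$ it is closed in $X$. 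This proves that all sublevel sets are closed, hence $F$ is lower semi-continuous.

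I do not expect a real obstacle here. The two small points that must be handled with care are that a linear subspace of a finite-dimensional real vector space is automatically closed --- this is the only place the finite-dimensionality hypothesis of the corollary enters --- and the bookkeeping of the value $+\infty$, which is taken care of by the inclusion $S_c\subset D_F$ valid for $c\ge 0$. One could equally argue sequentially: given $x_\nu\to a$, if $a\notin D_F$ a whole neighbourhood of $a$ misses the closed set $D_F$, so $F(x_\nu)=\infty$ for large $\nu$ and $\liminf F(x_\nu)=\infty=F(a)$; if $a\in D_F$, the terms $x_\nu\notin D_F$ contribute the value $\infty$ and the terms $x_\nu\in D_F$ satisfy $F(x_\nu)\to F(a)$ by continuity of $F|_{D_F}$, so in either case $\liminf_\nu F(x_\nu)\ge F(a)$.
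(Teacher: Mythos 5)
Your proof is correct and rests on the same two facts as the paper's: that $D_F$ is a linear subspace of the finite-dimensional space $X$ (hence closed) and that $F|_{D_F}$ is continuous, both supplied by Proposition \ref{prop.continuity}. The paper argues sequentially, exactly as in your closing paragraph; your primary sublevel-set formulation is just a cleaner packaging of the same argument, with the small merit of making explicit the closedness of $D_F$, which the paper uses only implicitly when it asserts that $X\setminus D_F$ is open.
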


\begin{proof}
 We need to prove that $F(a) \leq  \liminf_{\nu \to \infty} F(x_{\nu})$ for every sequence $x_{\nu} \in X$ converging to
 $a$. If $F(a) = \infty$, then $a$ belongs to the open set $X \setminus D_F$. It follows then that $x_{\nu} \not\in D_F$
 for large enough $\nu$ and therefore
 $$
   \liminf_{\nu \to \infty} F(x_{\nu}) = \infty = F(a).
 $$
 If  $F(a) < \infty$, then two cases may occur. If infinitely many $x_{\nu}$ belong to $D_F$, then by the previous proposition we have
 $$
  F(a) = \lim_{\nu \to \infty, x_{\nu} \in D_F} F(x_{\nu}) = \liminf_{\nu \to \infty} F(x_{\nu}).
 $$
 If on the other hand $D_F$ contains only finitely many elements of the sequence $x_{\nu}$, then
 $$
   \liminf_{\nu \to \infty} F(x_{\nu}) = \infty > F(a).
 $$ 
\end{proof}

\medskip

\begin{definition}
Given a weak Minkowski norm $F$ on a vector space $X$ we define the open and closed \emph{unit balls} at the origin as
$$
 \Omega_F = \{ x \in X : F(x) < 1\}, \quad \text{and} \quad  \overline{\Omega}_F = \{ x \in X : F(x) \leq 1\}.
$$
The set 
$$
  \mathcal{I}_F = \{ x \in X : F(x) = 1\} 
$$
is called the \emph{unit sphere} or the \emph{indicatrix} of $F$. \index{Indicatrix}
\end{definition}

\begin{proposition}\label{prop.int_cont}
 Let $F$ be a weak Minkowski norm on a finite-dimensional vector space $X$. Then the following are equivalent
 \begin{enumerate}
  \item $F$ is finite,
  \item $F$ is continuous,
  \item $\Omega_F$ is open,
  \item $0$ is an interior point of $\Omega_F$.
\end{enumerate}
\end{proposition}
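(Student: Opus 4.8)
The plan is to establish the cycle of implications $(1)\Rightarrow(2)\Rightarrow(3)\Rightarrow(4)\Rightarrow(1)$, relying on Proposition~\ref{prop.continuity} and Corollary~\ref{cor.semicontinuity} for the analytic content. The implications $(2)\Rightarrow(3)$ and $(3)\Rightarrow(4)$ are essentially formal: if $F$ is continuous then $\Omega_F = F^{-1}([0,1))$ is the preimage of an open set, hence open; and if $\Omega_F$ is open, then since $F(0)=0<1$ we have $0\in\Omega_F$, so $0$ is an interior point of $\Omega_F$. The implication $(1)\Rightarrow(2)$ is immediate from Proposition~\ref{prop.continuity}: if $F$ is finite then $D_F = X$, which is a finite-dimensional subspace of itself, so $F$ is continuous on all of $X$.

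The only implication requiring a genuine argument is $(4)\Rightarrow(1)$. Suppose $0$ is an interior point of $\Omega_F$, so there is $r>0$ with the Euclidean ball $B(0,r)\subset\Omega_F$, i.e.\ $F(y)<1$ for all $y$ with $\|y\|<r$ (fixing any auxiliary Euclidean norm $\|\cdot\|$ on the finite-dimensional space $X$). Then for an arbitrary nonzero $x\in X$, writing $y = \frac{r}{2\|x\|}x$ we have $\|y\| = r/2 < r$, so $F(y)<1$, and by positive homogeneity $F(x) = \frac{2\|x\|}{r}F(y) < \frac{2\|x\|}{r} < \infty$. Hence $F$ is finite everywhere. (One should also note $F(0)=0<\infty$ trivially.) This step is the main obstacle in the sense that it is the only place where one uses that being an interior point forces a uniform bound on a neighbourhood, which positive homogeneity then propagates to the whole space; it is short but is where the finite-dimensionality and the definition of ``interior point'' genuinely enter.

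Putting these together closes the loop and proves all four conditions equivalent. I would write it as: $(1)\Rightarrow(2)$ from Proposition~\ref{prop.continuity}; $(2)\Rightarrow(3)$ since $\Omega_F$ is the preimage of an open interval; $(3)\Rightarrow(4)$ since $0\in\Omega_F$; and $(4)\Rightarrow(1)$ by the homogeneity argument above. No appeal to Corollary~\ref{cor.semicontinuity} is strictly needed, though it could be invoked to streamline the discussion of semicontinuity if one wanted to phrase $(4)\Rightarrow(1)$ contrapositively (if $F(x_0)=\infty$ for some $x_0$, then $F\equiv\infty$ on the ray $\mathbb{R}_{>0}x_0$ by homogeneity, and lower semicontinuity then shows $F$ is unbounded near every point of that ray, contradicting $F<1$ on a neighbourhood of $0$ — but the direct argument is cleaner).
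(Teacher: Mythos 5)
Your proof is correct and follows essentially the same route as the paper: $(1)\Rightarrow(2)$ via Proposition~\ref{prop.continuity}, $(2)\Rightarrow(3)\Rightarrow(4)$ as formal observations, and $(4)\Rightarrow(1)$ via positive homogeneity. The only cosmetic difference is that the paper argues $(4)\Rightarrow(1)$ contrapositively (if $F(a)=\infty$ then the whole ray $\lambda a$, $\lambda>0$, misses $\Omega_F$, so $0$ is not interior), whereas you argue it directly with an explicit bound $F(x)\leq \tfrac{2}{r}\|x\|$; both hinge on the same homogeneity property.
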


\begin{proof}  The implication (1) $\Rightarrow$ (2) is Proposition \ref{prop.continuity} and the implications 
(2) $\Rightarrow$ (3)  $\Rightarrow$ (4) are obvious. To prove 
(4) $\Rightarrow$ (1) we suppose $F$ is not finite. Then there exists $a\in X$ such that $F(a) = \infty$. Thus, $F(\lambda a)
= \infty$ for all $\lambda >0$, in particular $\lambda a \not\in \Omega_F$  for all $\lambda >0$ and therefore 
$0$ is not an interior point of $\Omega_F$.
\end{proof}

\medskip

\begin{proposition}\label{prop.Fseparates}
 Let $F$ be a weak Minkowski norm on $\r^n$. Then the following are equivalent
 \begin{enumerate}
  \item $F$ is separating (i.e. $F(x) >0$ for all $x\neq 0$),
  \item $F$ is bounded below on the Euclidean unit sphere $S^{n-1} \subset \mathcal{U}$,
  \item $\overline{\Omega}_F$ is bounded.
\end{enumerate}
\end{proposition}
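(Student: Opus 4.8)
The plan is to prove the three implications in a cycle, exploiting the positive homogeneity of $F$ to reduce everything to statements about the Euclidean unit sphere $S^{n-1}$.

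First I would establish $(1)\Rightarrow(2)$. Suppose $F$ is separating but not bounded below on $S^{n-1}$. Then there is a sequence $u_\nu \in S^{n-1}$ with $F(u_\nu)\to 0$. By compactness of $S^{n-1}$ we may pass to a subsequence converging to some $u\in S^{n-1}$; in particular $u\neq 0$. By Corollary \ref{cor.semicontinuity}, $F$ is lower semicontinuous, so $F(u)\leq \liminf_\nu F(u_\nu)=0$, hence $F(u)=0$, contradicting the hypothesis that $F$ is separating. This is the step where the earlier work (lower semicontinuity on a finite-dimensional space) does the real job, and I expect it to be the main point requiring care: one must remember that $F$ need not be continuous (it could take the value $\infty$), so only semicontinuity is available, but that is exactly enough for this one-sided estimate.

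Next, $(2)\Rightarrow(3)$. Suppose $F\geq c>0$ on $S^{n-1}$. For any nonzero $x$, write $x = \|x\| u$ with $u = x/\|x\| \in S^{n-1}$ (Euclidean norm); by homogeneity $F(x) = \|x\|\, F(u) \geq c\,\|x\|$. Hence if $F(x)\leq 1$ then $\|x\|\leq 1/c$, so $\overline{\Omega}_F$ is contained in the Euclidean ball of radius $1/c$ and is therefore bounded.

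Finally, $(3)\Rightarrow(1)$. Suppose $\overline{\Omega}_F$ is bounded, say $\overline{\Omega}_F \subset \{\|x\|\leq R\}$, and suppose for contradiction that $F(x)=0$ for some $x\neq 0$. Then by homogeneity $F(\lambda x) = \lambda F(x) = 0 \leq 1$ for every $\lambda \geq 0$, so the entire ray $\{\lambda x : \lambda \geq 0\}$ lies in $\overline{\Omega}_F$; choosing $\lambda$ large makes $\|\lambda x\| = \lambda \|x\| > R$, contradicting boundedness. Hence $F$ is separating, closing the cycle $(1)\Rightarrow(2)\Rightarrow(3)\Rightarrow(1)$ and proving the equivalence.
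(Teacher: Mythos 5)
Your proof is correct and follows essentially the same cycle $(1)\Rightarrow(2)\Rightarrow(3)\Rightarrow(1)$ as the paper, with identical arguments for the last two implications. The only (minor) difference is in $(1)\Rightarrow(2)$: you invoke lower semicontinuity (Corollary \ref{cor.semicontinuity}) at the limit point, whereas the paper passes to a subsequence lying in the closed subspace $D_F$ and uses continuity of $F$ there (Proposition \ref{prop.continuity}); both are valid and yield the same conclusion.
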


\begin{proof} 
 (1) $\Rightarrow$ (2):  Suppose that $F$ is not bounded below on $S^{n-1}$. Then there
exists a sequence $x_j \in S^{n-1}$ such that $F(x_j) \to 0$. Choosing a subsequence if necessary, 
we may assume, by compactness of the sphere, that $F(x_j) < \infty$ for all $j$, i.e. $x_j \in D_F \cap S^{n-1}$ and that $x_j$ 
converges to some point $x_0 \in D_F \cap S^{n-1}$. 
Since $F$ is continuous on $D_F$, we have $F(x_0) = \lim_{j \to \infty}F(x_j) = 0$. Since
$x_0 \neq 0$ (it is a point on the sphere), it follows that $F$ is not separating. \\
(2) $\Rightarrow$ (3):  Condition (2) states that there exists $\mu >0$ such that $F(x) \geq \mu$ for all $x \in S^{n-1}$. Therefore $F(y) \leq 1$ implies  $\|y\| \leq  \frac{1}{\mu}$.\\
(3) $\Rightarrow$ (1):   Suppose $F$ is non separating. Then there exists $x \neq 0$ with $F(x) = 0$.
Therefore $F(\lambda x) = 0$ for any $\lambda >0$. In particular $\r_+ x \subset \overline{\Omega}_F$ which is therefore
unbounded.
\end{proof}

\medskip

\begin{definition}
 A \emph{Minkowski norm} is a weak Minkowski norm that is finite and separating. It is simply called a \emph{norm}
  if it is furthermore reversible. 
\end{definition}

To a finite and separating norm is associated a well-defined topology, viz. the topology associated to the symmetrization of the weak metric defined by Equation (\ref{eq.deltaF}) (which is a genuine metric).
For a deeper investigation of various topological questions we refer to the book \cite{Cobzas} by  S. Cobzas.

\begin{corollary}
 The topology defined by the distance (\ref{eq.deltaF}) associated to a Minkowski norm on  $\r^n$ 
 coincides with the Euclidean topology. 
\end{corollary}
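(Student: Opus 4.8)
The plan is to establish the two-sided linear estimate $\mu\|x\| \le F(x) \le M\|x\|$, valid for all $x\in\r^n$ with suitable constants $0 < \mu \le M < \infty$; both the asserted coincidence of topologies and, in fact, the bi-Lipschitz equivalence of the Euclidean distance with the symmetrization of $\delta_F$ follow from it at once. By translation invariance of $\delta_F$ it suffices to compare the balls $B_F(0,r) = \{y\in\r^n : F(y) < r\}$ centered at the origin with the Euclidean balls $B_\varepsilon(0)$.

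To get the estimate I would invoke the two preceding propositions. Because a Minkowski norm is finite, Proposition \ref{prop.int_cont} gives that $F$ is continuous on $\r^n = D_F$; being continuous on the compact Euclidean sphere $S^{n-1}$, it is bounded there, say $F(u) \le M < \infty$ for all $u\in S^{n-1}$. Because a Minkowski norm is separating, Proposition \ref{prop.Fseparates} gives that $F$ is bounded below on $S^{n-1}$, say $F(u) \ge \mu > 0$ for all $u\in S^{n-1}$. Writing $x = \|x\|\,(x/\|x\|)$ for $x\ne 0$ and using the positive homogeneity of $F$ (Definition \ref{def.WMN}, ii), these two bounds become $\mu\|x\| \le F(x) \le M\|x\|$ for every $x$, the case $x=0$ being trivial.

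From $F(x) \le M\|x\|$ we get $B_{\varepsilon/M}(0) \subset B_F(0,\varepsilon)$, so every $\delta_F$-ball contains a Euclidean ball about its center; equivalently, since $F$ is Euclidean-continuous, $\delta_F$-balls are Euclidean-open, and the $\delta_F$-topology is coarser than the Euclidean one. From $F(x) \ge \mu\|x\|$ we get $B_F(0,\mu\varepsilon) \subset B_\varepsilon(0)$, so every Euclidean ball contains a $\delta_F$-ball about its center, whence the Euclidean topology is coarser than the $\delta_F$-topology. The two therefore coincide. Applying the same bounds to $\max\{\delta_F(x,y),\delta_F(y,x)\}$ shows that this symmetrized metric — the genuine distance meant in the statement — satisfies $\mu\|x-y\| \le \max\{\delta_F(x,y),\delta_F(y,x)\} \le M\|x-y\|$, hence is bi-Lipschitz equivalent to the Euclidean distance.

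There is no real obstacle here: all the substance is packaged in Propositions \ref{prop.int_cont} and \ref{prop.Fseparates}. The only point requiring a little care is the (harmless) asymmetry of $\delta_F$: the linear bounds are insensitive to replacing $x$ by $-x$, so the argument applies verbatim to the forward balls $\{y : F(y-x)<r\}$, to the backward balls $\{y : F(x-y)<r\}$, and to their symmetrization. Alternatively one can bypass the sup/inf over the sphere entirely: continuity of $F$ makes the $\delta_F$-balls open, while the equivalence (1)$\Leftrightarrow$(3) of Proposition \ref{prop.Fseparates} together with $B_F(0,r) = r\,\Omega_F$ shows each such ball is bounded, hence trapped inside a Euclidean ball.
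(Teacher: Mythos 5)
Your argument is correct and is essentially the paper's own proof: both establish the two-sided estimate $\mu\|x\|\leq F(x)\leq M\|x\|$ by combining continuity of $F$ (from finiteness) with compactness of the Euclidean unit sphere and the separating property, then conclude the topologies agree. You are merely more explicit than the paper in citing Proposition \ref{prop.Fseparates} for the lower bound and in spelling out the ball inclusions and the treatment of the asymmetry, but no new idea is involved.
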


\begin{proof}
Proposition  \ref{prop.continuity} implies that $F$ is continuous. From the compactness of 
the Euclidean unit sphere $S^{n-1}$ we thus have a constant $\mu>0$ 
such that $\mu \leq F(x) \leq \frac{1}{\mu}$ for all points $x$ on  $S^{n-1}$. It follows that 
\begin{equation}\label{eq.estnorm}
  \mu \|x\| \leq F(x) \leq \frac{1}{\mu}\|x\|
\end{equation}
 for all $x\in \r^n$ and therefore $F$ induces the same topology as the Euclidean norm.
\end{proof}

\medskip

The next result shows how one can reconstruct the weak Minkowski norm from its unit ball.

\medskip

\begin{proposition}
 Let $\Omega \subset \r^n$ be a convex set containing the origin. Define a  function $F : \r^n \to [0,\infty]$
 by
\begin{equation} \label{mink.functional}
  F(x) = \inf \{ t \geq 0 : x \in t\cdot \Omega \}.
\end{equation} 
Then $F$ is a weak Minkowski norm and $\overline{\Omega}_F$ coincides with the closure of $\Omega$, that is
 $
  \overline{\Omega} =  \{ x \in \r^n : F(x) \leq 1 \}.
 $
 Furthermore, if $\Omega$ is open, then $\Omega =  \{ x \in \r^n : F(x) < 1 \}$.
\end{proposition}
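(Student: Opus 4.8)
The plan is to verify the two weak-Minkowski-norm axioms for the gauge functional $F$ defined by \eqref{mink.functional}, and then to identify its sublevel sets with $\Omega$ and $\overline{\Omega}$.

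First I would establish positive homogeneity. For $\lambda > 0$, the set $\{t \ge 0 : x \in t\Omega\}$ and the set $\{t \ge 0 : \lambda x \in t\Omega\}$ are related by the scaling $t \mapsto \lambda t$, since $\lambda x \in t\Omega$ iff $x \in (t/\lambda)\Omega$; taking infima gives $F(\lambda x) = \lambda F(x)$. For $\lambda = 0$ one checks directly that $F(0) = 0$ (the origin lies in $0 \cdot \Omega = \{0\}$, or in $t\Omega$ for every $t>0$ since $0 \in \Omega$), so $F(0\cdot x) = 0 = 0 \cdot F(x)$, including the convention that this holds even when $F(x) = \infty$.

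Next the subadditivity. Let $x_1, x_2 \in \r^n$; if either $F(x_i) = \infty$ the inequality is trivial, so assume both are finite. Fix $\varepsilon > 0$ and pick $t_i > 0$ with $x_i \in t_i \Omega$ and $t_i < F(x_i) + \varepsilon$. Write $x_i = t_i \omega_i$ with $\omega_i \in \Omega$. Then, setting $s = t_1 + t_2$,
$$
 x_1 + x_2 = t_1 \omega_1 + t_2 \omega_2 = s\left(\frac{t_1}{s}\omega_1 + \frac{t_2}{s}\omega_2\right) \in s\,\Omega,
$$
by convexity of $\Omega$, so $F(x_1+x_2) \le s = t_1 + t_2 < F(x_1) + F(x_2) + 2\varepsilon$; let $\varepsilon \to 0$. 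This shows $F$ is a weak Minkowski norm in the sense of Definition \ref{def.WMN}. One should be slightly careful about whether the infimum defining $F$ is attained — it need not be — which is exactly why the $\varepsilon$-argument above is phrased with strict inequalities rather than with actual minimizers; this is the one place where a little care is needed, but it is not a serious obstacle.

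Finally, the identification of the balls. The inclusion $\overline{\Omega}_F \supseteq \overline{\Omega}$ is obtained by noting that if $x \in \Omega$ then $F(x) \le 1$, hence $\Omega \subseteq \overline{\Omega}_F$, and $\overline{\Omega}_F$ is closed because $F$ is lower semi-continuous by Corollary \ref{cor.semicontinuity}, so it contains $\overline{\Omega}$. For the reverse inclusion $\overline{\Omega}_F \subseteq \overline{\Omega}$: if $F(x) \le 1$, then for every $t > 1$ we have $x \in t\Omega$, i.e. $x/t \in \Omega$; letting $t \downarrow 1$ gives $x/t \to x$, so $x \in \overline{\Omega}$. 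Thus $\overline{\Omega}_F = \overline{\Omega}$. For the last assertion, assume $\Omega$ is open. If $F(x) < 1$, pick $t$ with $F(x) \le t < 1$ and $x \in t\Omega \subseteq \Omega$ (using $0 \in \Omega$ and convexity to get $t\Omega \subseteq \Omega$ for $t \le 1$), so $x \in \Omega$. Conversely if $x \in \Omega$, then since $\Omega$ is open there is $t < 1$ with $x/t \in \Omega$, hence $F(x) \le t < 1$; therefore $\Omega = \{F < 1\}$. The main point to keep straight throughout is the direction of the scaling inequalities and the use of lower semi-continuity to pass to closures; none of the steps is genuinely hard.
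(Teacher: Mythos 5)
Your proof is correct and follows essentially the same route as the paper: homogeneity by rescaling the defining set, subadditivity from convexity of $\Omega$ (your $\varepsilon$-argument is just the paper's ``$F(x)<s$ and $F(y)<t$ imply $F(x+y)<s+t$'' step made explicit), the inclusion $\{F\leq 1\}\subseteq\overline{\Omega}$ by letting the scaling parameter tend to $1$, and the reverse inclusion via lower semi-continuity (Corollary \ref{cor.semicontinuity}). The only divergence is cosmetic: for the open case you argue directly with the scalings $t\Omega\subseteq\Omega$ and $\lambda x\in\Omega$ for $\lambda$ slightly above $1$, which is in fact a bit more self-contained than the paper's brief appeal to continuity of $F$.
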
 

\medskip

The function $F$ defined by (\ref{mink.functional}) is called the \emph{Minkowski functional} of $\Omega$.
\index{Minkowski functional}\index{functional!Minkowski}

\medskip

\begin{proof}
 We need to verify the two conditions in Definition \ref{def.WMN}. For $\lambda>0$, we have
  \begin{eqnarray*}
   F(\lambda x)  &= &\inf \{ s \geq 0 : \lambda x \in s\cdot \Omega \}
     \\    & =  &\inf \{ s \geq 0 :  x \in \frac{s}{\lambda}\cdot \Omega \}
 \\     &\underset{(s=\lambda t)}{=}   &\lambda \inf \{ t \geq 0 :  x \in t\cdot \Omega \}
 \\ &  = & \lambda F(x).
\end{eqnarray*}
Now because $\Omega$ is convex we have for $s,t > 0$
$$
 \frac{x}{s} \in \Omega \mbox{ and } \frac{y}{s} \in \Omega \ \Longrightarrow \ 
 \frac{x+y}{s+t} =  \frac{s\cdot \frac{x}{s}+t \cdot \frac{y}{t}}{s+t} \in \Omega.
$$
Therefore
$$
 F(x) < s \mbox{ and }F(y) < t \ \Longrightarrow \
 F(x+y) < s+t,
$$
which is equivalent to $F(x+y) \leq F(x) + F(y)$. his proves the first part of the proposition. \\
To prove the remaining assertions, observe that $F(x) \leq 1$ means that $tx \in \Omega$ for 
any $0<t<1$ and thus $x \in \overline{\Omega}$. This shows that 
$$
   {\Omega} \subset  \{ x \in \r^n \tq  F(x) \leq 1 \}   \subset \overline{\Omega}.
$$
The converse inclusion $\overline{\Omega} \subset  \{ x \in \r^n \tq F(x) \leq 1 \}$ follows from the lower semi-continuity
of $F$ (Corollary \ref{cor.semicontinuity}). Finally, if $\Omega$ is open then $F$ is continuous (Proposition
\ref{prop.int_cont}) and therefore $ {\Omega} =  \{ x \in \r^n : F(x) < 1 \}$.
\end{proof}

\medskip

Thus we have established one-to-one correspondences between weak Min--kowski metrics on $\r^n$,  weak Minkowski norms and closed convex sets containing the origin. The closed convex set associated to a weak Minkowski norm $F$ is the set $\overline{\Omega}_F= \{ x \in X : F(x) \leq 1\}
$. The associated weak metric is separating if and only if the associated convex set is bounded and the metric is finite if and only if the origin is an interior point of the convex set.

\medskip

\begin{remark}
These concepts have some important consequences in convex geometry. For instance one can easily prove that \emph{every unbounded convex set in $\r^n$ must contain a ray}. Indeed, let $\Omega \subset \r^n$ be unbounded and convex. One may assume that $\Omega$ contains the origin. Then by Proposition \ref{prop.Fseparates} 
its  weak Minkowski functional $F$ is not separating, that is, there exists $a\neq 0$ in $\r^n$
such that $F(a) = 0$; but then $F(\lambda a) = 0$ for every $\lambda >0$ and therefore the ray $\Omega$ contains the ray  $\r_+a$.
\end{remark}

\medskip  

Let us conclude this section with two important results from Minkowski geometry. A Minkowski norm on $\r^n$ is said to be 
\emph{ Euclidean} if it is associated to a scalar product.

\begin{proposition}\label{prop.round}
Let $\delta$ be a  Minkowski metric on $\r^n$. Then $\delta$ is a Euclidean metric if and only the ball
$$
  B_{(a,r)} = \{x \in X \tq \delta(a,x) < r \} \subset \r^n
$$
(for some arbitrary $a\in \r^n$ and $r>0$) is an ellipsoid centered at $a$.
\end{proposition}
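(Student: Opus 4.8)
The plan is to pass from the ball to the associated weak Minkowski norm $F(x)=\delta(0,x)$ given by Proposition~\ref{proprieteF}, and to recognise — via the Minkowski functional — that $F$ comes from a scalar product precisely when its unit ball is a centrally symmetric ellipsoid.

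First I would normalise the situation. Since $\delta$ is translation invariant, $\delta(a,x)=F(x-a)$, so $B_{(a,r)}=a+r\,\Omega_F$, where $\Omega_F=\{x:F(x)<1\}$ is open by Proposition~\ref{prop.int_cont} (recall a Minkowski norm is finite and separating). Hence for one pair $(a,r)$ — equivalently for every pair — the ball $B_{(a,r)}$ is an ellipsoid centered at $a$ if and only if $\Omega_F$ is an ellipsoid centered at the origin. (Since $\delta$ is reversible, $\Omega_F=-\Omega_F$, so if $\Omega_F$ is an ellipsoid at all it is automatically centered at $0$; we do not actually need this.) The forward implication is then immediate: if $F(x)=\sqrt{\langle x,x\rangle}$ for a positive-definite scalar product, then $\Omega_F=\{x:\langle x,x\rangle<1\}$ is the image of the Euclidean unit ball under any linear isomorphism diagonalising $\langle\cdot,\cdot\rangle$, hence an ellipsoid centered at the origin, and so $B_{(a,r)}$ is an ellipsoid centered at $a$.

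For the converse, assume $\Omega_F$ is an ellipsoid centered at the origin; by definition of ellipsoid this means $\overline{\Omega}_F=\{x\in\r^n : q(x)\le 1\}$ for some positive-definite quadratic form $q$ on $\r^n$. I would then exhibit $F$ as the Minkowski functional of its own closed unit ball: for $t>0$ one has $x/t\in\overline{\Omega}_F \iff F(x)\le t \iff q(x)\le t^2$, whence $F(x)=\inf\{t>0 : x/t\in\overline{\Omega}_F\}=\inf\{t>0:q(x)\le t^2\}=\sqrt{q(x)}$. It remains only to produce the scalar product: set $\langle x,y\rangle=\frac14\big(q(x+y)-q(x-y)\big)$; because $q$ is a homogeneous quadratic polynomial, this is symmetric and bilinear, and it is positive-definite since $q(x)=F(x)^2>0$ for $x\neq 0$. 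Then $F(x)=\sqrt{\langle x,x\rangle}$, so $\delta$ is a Euclidean metric.

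The argument has no genuine analytic difficulty; the care needed is purely in the bookkeeping — interpreting ``ellipsoid centered at the origin'' as the sublevel set of a positive-definite quadratic form, and checking the two homogeneity computations (that $F$ is the Minkowski functional of $\overline{\Omega}_F$, and that this functional equals $\sqrt q$). If one prefers, the identity $F=\sqrt q$ can alternatively be obtained from the one-to-one correspondence between weak Minkowski norms on $\r^n$ and closed convex sets containing the origin, since $\sqrt q$ is a weak Minkowski norm with the same closed unit ball $\overline{\Omega}_F$ as $F$.
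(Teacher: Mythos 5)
Your proposal is correct and follows essentially the same route as the paper: both directions reduce, via translation invariance and homogeneity, to the unit ball at the origin being an ellipsoid centered at $0$, and then recover $F$ as the Minkowski functional of that ellipsoid (the paper does this by a linear change of coordinates making the ellipsoid the Euclidean unit ball, you do it equivalently by writing the ellipsoid as $\{q\le 1\}$ and polarizing). One small slip in a parenthetical you rightly discard: a Minkowski metric in this paper is \emph{not} assumed reversible (that is reserved for the term ``norm''), so $\Omega_F=-\Omega_F$ is not automatic.
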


Notice that the above proposition is false if the ellipsoid is not centered at $a$.


\begin{proof}
Recall that by definition an (open) ellipsoid is a convex set in $\r^n$ that is the affine image of the open Euclidean unit ball.
If the weak metric $\delta$ is Euclidean, then it is obvious that every ball is an ellipsoid. Conversely, suppose that some ball
of an arbitrary  Minkowski metric $\delta$ is an ellipsoid. Then the ball with the same radius centered at the origin is also an ellipsoid since $\delta$ is translation invariant, that is,  $B_{(0,r)} =\{x \in X : F(x)=\delta(0,x) < r \}$ is an ellipsoid. But then
$$
 \Omega = B_{(0,1)} = \frac{1}{r} \cdot B_{(0,1)}
$$
is also an ellipsoid. Changing coordinates if necessary, one may assume that 
$$
  \Omega = \{x \in \r^n \tq \sum_i x_i^2 < 1\},
$$ 
which is the Euclidean unit ball. It follows that
$$
  F(x) = \inf \{ t>0 \tq  x \in t\,  \Omega \} = \inf \{ t>0 \tq    \|x\| < t\} = \|x\|
$$
where $\| \cdot \|$ denotes the Euclidean norm. 
\end{proof}

We have the following result on the isometries of a Minkowski metric.

\begin{theorem}
Let $\delta$ be a  Minkowski metric on $\r^n$. Then every isometry of $\delta$ is an affine transformation of $\r^n$,
and the group $\Iso(\r^n,\delta)$ of isometries of $\delta$ is conjugate within the affine group to a subgroup of the group $E(n)$
of Euclidean isometries of $\r^n$.
Furthermore $\Iso(\r^n,\delta)$ is conjugate to the full group $E(n)$ if and only if $\delta$ is a Euclidean metric.
\end{theorem}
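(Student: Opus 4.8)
The plan is to proceed in three stages: show every isometry is affine, identify $\Iso(\r^n,\delta)$ as a semidirect product $\r^n\rtimes G$ of the translations with a linear group $G$, and then analyze $G$ by a compactness argument. For the first stage, let $F$ be the Minkowski norm with $\delta(x,y)=F(y-x)$; it is finite and separating but possibly not reversible, so I would first \emph{symmetrize} it by setting $\|x\|:=F(x)+F(-x)$. Using Proposition~\ref{proprieteF} one checks at once that $\|\cdot\|$ is a genuine norm on $\r^n$ (positively homogeneous, even, subadditive, finite since $F$ is, and $\|x\|\geq F(x)>0$ for $x\neq 0$). Its distance function is $\bar\delta(x,y)=\|y-x\|=\delta(x,y)+\delta(y,x)$, and any bijective $\delta$-isometry $\varphi$ automatically preserves $\bar\delta$, hence is a surjective isometry of the normed space $(\r^n,\|\cdot\|)$ and therefore is affine by the Mazur--Ulam theorem. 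Writing such an isometry as $\varphi(x)=Ax+b$ and using translation invariance of $\delta$ (to replace $\varphi$ by $x\mapsto Ax$, which is still an isometry), we get $F(Ax)=\delta(0,Ax)=\delta(0,x)=F(x)$; conversely $x\mapsto Ax+b$ is an isometry whenever $F\circ A=F$. Hence $\Iso(\r^n,\delta)=\r^n\rtimes G$ with $G:=\{A\in\mathrm{GL}(n,\r):F\circ A=F\}$, and it remains to show that $G$ is conjugate inside $\mathrm{GL}(n,\r)$ to a subgroup of $\mathrm{O}(n)$, with equality exactly when $\delta$ is Euclidean.

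Next I would show that $G$ is compact. It preserves the closed unit ball $\overline\Omega_F$, which by Propositions~\ref{prop.int_cont} and \ref{prop.Fseparates} is a bounded convex body with $0$ as an interior point; choosing Euclidean balls with $B(0,r)\subseteq\overline\Omega_F\subseteq B(0,R)$ forces every $A\in G$ to have operator norm at most $R/r$, so $G$ is bounded, and it is closed because $F$ is continuous (Proposition~\ref{prop.continuity}). A compact subgroup of $\mathrm{GL}(n,\r)$ is conjugate into $\mathrm{O}(n)$: averaging the standard inner product over the normalized Haar measure of $G$ produces a $G$-invariant inner product on $\r^n$, and in a basis orthonormal for that inner product every element of $G$ is orthogonal, i.e. $PGP^{-1}\subseteq\mathrm{O}(n)$ for the corresponding change of basis $P$. (Equivalently, $G$ must preserve the John ellipsoid of $\overline\Omega_F$.) Conjugating $\r^n\rtimes G$ by the linear map $P$ then carries it into $\r^n\rtimes\mathrm{O}(n)=E(n)$, which proves the main assertion.

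For the equality statement, if $\delta$ is Euclidean then $F$ is the norm of a scalar product and, in coordinates making that scalar product standard, $\Iso(\r^n,\delta)$ coincides with the full Euclidean group, so it is conjugate to $E(n)$. Conversely, suppose $\psi\,\Iso(\r^n,\delta)\,\psi^{-1}=E(n)$ for some affine $\psi$ with linear part $P$; comparing the linear parts of the two groups gives $PGP^{-1}=\mathrm{O}(n)$. Then $G$ acts transitively on each ``ellipsoidal sphere'' $\{x:\|Px\|=c\}$, so for any $v$ in the indicatrix $\mathcal{I}_F$ the whole ellipsoid $\{x:\|Px\|=\|Pv\|\}$ is the $G$-orbit of $v$ and lies in $\mathcal{I}_F=\partial\overline\Omega_F$. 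If this ellipsoidal value were different for two points of $\mathcal{I}_F$, the smaller of the two resulting ellipsoids would consist of points lying in the interior of the convex hull of the larger one, hence in the interior of $\overline\Omega_F$, contradicting that it lies on $\partial\overline\Omega_F$. Therefore $x\mapsto\|Px\|$ is constant on $\mathcal{I}_F$, so $\mathcal{I}_F$ is a single ellipsoid centered at the origin, $\overline\Omega_F$ is the corresponding solid ellipsoid, and $\delta$ is Euclidean by Proposition~\ref{prop.round}.

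The routine parts are the first two stages together with the boundedness of $G$; the substantive inputs are the Mazur--Ulam theorem and the conjugacy of compact subgroups of $\mathrm{GL}(n,\r)$ into $\mathrm{O}(n)$ (equivalently, the John ellipsoid). I expect the main obstacle to be the converse in the equality case: turning the maximality of the isometry group into the statement that the unit ball is an ellipsoid, where the crucial point is the convex-geometric fact that the boundary of a convex body cannot contain two distinct concentric homothetic ellipsoids.
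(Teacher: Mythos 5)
Your proof is correct and follows the same overall skeleton as the paper's: Mazur--Ulam gives affineness, the isometry group is identified with the translations extended by the linear group $G$ preserving $F$, one conjugates $G$ into $\mathrm{O}(n)$, and the converse in the equality case comes from the fact that a unit ball invariant under the full orthogonal group must be a round ball. You diverge in three places, each of which is a reasonable (and in two cases arguably sharper) variant. First, you symmetrize $F$ to the genuine norm $F(x)+F(-x)$ before invoking Mazur--Ulam; the paper applies Mazur--Ulam directly to the possibly non-reversible $\delta$, which strictly speaking requires exactly this reduction (or an asymmetric version of the theorem), so your step closes a small gap. Second, to conjugate the compact group $G$ into $\mathrm{O}(n)$ the paper uses the John ellipsoid of the unit ball $\Omega_F$ (recentred at the origin, and invariant under the linear parts of isometries by uniqueness), whereas your primary route is averaging the standard inner product over the Haar measure of $G$ after establishing compactness directly; these are interchangeable standard devices, and you note the John-ellipsoid alternative yourself. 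Third, in the converse direction the paper simply asserts that the $\mathrm{O}(n)$-invariant unit ball is a round sphere before applying Proposition \ref{prop.round}, while you spell out the justification, namely that the boundary of a convex body cannot contain two distinct concentric homothetic ellipsoids; that is precisely the convex-geometric fact needed, and your transitivity-of-orbits argument for it is sound.
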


\begin{proof}
The first assertion is the Mazur-Ulam Theorem, see \cite{Nica}\index{Mazur-Ulam Theorem}.\index{Theorem!Mazur-Ulam}. To prove the 
second assertion, we recall that every bounded convex set $\Omega$ in $\r^n$ with non-empty interior contains a unique ellipsoid $J_{\Omega} \subset \Omega$ of maximal volume. This is called the \emph{John ellipsoid}\index{John ellipsoid}\index{ellipsoid!John} of $\Omega$, see \cite{Barvinok}. 

Let us consider the unit ball $\Omega = B_{(\delta, 0,1)}$ of our Minkowski metric and let us denote by $J$ its John ellipsoid
and by $z\in J$ its center. We call $J^* = J-z$ the \emph{centered John ellipsoid}\index{centered John ellipsoid}\index{ellipsoid!centered John} of $\Omega$. 
Consider now an arbitrary isometry $g \in \Iso(\r^n,\delta)$. Set $\widetilde{g}(x) = g(x) - b$, where $b = g(0)$.
Then $\widetilde{g}$ is 
an  isometry for $\delta$ fixing the origin. By construction and uniqueness, the centered John ellipsoid is invariant: $\widetilde{g}(J^*) = J^*$. There exists an element $A \in \mathrm{GL}_n(\r)$ such that $AJ^* = \mathbb{B}$ is the Euclidean unit ball.  Let us set $f:= A\circ \widetilde{g} \circ A^{-1}$. Then
$$
 f (\mathbb{B}) =  \mathbb{B}.
$$
By Mazur-Ulam $\widetilde{g}$ is a linear map, therefore $f$ is a linear map preserving the Euclidean unit ball, which means that $f\in O(n)$. We thus obtain
$$
  g(x) = A^{-1}(f(x) + Ab) A
$$
where $A$ is linear and $x \mapsto f(x) + Ab$ is a Euclidean isometry.

To prove the last assertion, one may assume, changing coordinates if necessary, that $\Iso(\r^n, \delta) = E(n)$.
Then the $\delta$-unit ball $\Omega$ is invariant under the orthogonal group $O(n)$ and it is therefore a round sphere.
We now conclude from Proposition \ref{prop.round} that $\delta$ is Euclidean.
\end{proof}

\section{The midpoint property}

\begin{definition}
 A weak metric $\delta$ on the real vector space $X$ satisfies the \emph{midpoint property}\index{midpoint property} if for any $p,q \in X$ we have
 $$
     \delta (p,m) = \delta (m,q) = \frac{1}{2}\delta (p,q)
 $$
 where $m = \frac{1}{2}(p+q)$ is the affine midpoint of $p$ and $q$.
\end{definition}

To describe the main consequence of this property, we shall use the notion of dyadic numbers.

\begin{definition}
  A \emph{dyadic number} is a rational number of the type $\lambda = 2^{-k}m$ with $m,k \in \mathbb{Z}$.
  We  denote  the set of dyadic numbers by
  $$\mathbb{D} = \bigcup_{k =0}^{\infty} 2^{-k} \mathbb{Z},$$
  and  the subset of nonnegative dyadic numbers by
  $\mathbb{D}_+ \subset \mathbb{D}.$\index{dydadic number}
\end{definition}

\begin{proposition}\label{prop.distpr}
 Let $\delta$ be a  weak metric on the real vector space $X$.
 Then $\delta$  satisfies the midpoint property if and only if for any pair of distinct points  $p,q \in X$ and  for any  
 $\mu,\lambda$ in $\mathbb{D}$ with  $\mu \leq \lambda$, we have
 \begin{equation} \label{eq.dydprop}
     \delta(\gamma(\mu),\gamma(\lambda)) = (\lambda - \mu)\cdot  \delta (p,q)
 \end{equation}
 where $\gamma (t) = tp + (1-t)q$.
\end{proposition}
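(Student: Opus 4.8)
The converse implication is immediate: if the displayed identity holds for every pair and all dyadic parameters, apply it with $\{\mu,\lambda\}=\{0,\tfrac12\}$ and with $\{\mu,\lambda\}=\{\tfrac12,1\}$, so that $\gamma(\tfrac12)=\tfrac12(p+q)=m$; this yields $\delta(p,m)=\delta(m,q)=\tfrac12\delta(p,q)$, i.e. the midpoint property. For the forward implication, the single fact that drives everything is that an affine parametrization carries affine midpoints to metric midpoints: for any parameters $a,b$ one has $\gamma(\tfrac{a+b}{2})=\tfrac12(\gamma(a)+\gamma(b))$, so applying the midpoint property to the pair $(\gamma(a),\gamma(b))$ gives $\delta(\gamma(a),\gamma(\tfrac{a+b}{2}))=\delta(\gamma(\tfrac{a+b}{2}),\gamma(b))=\tfrac12\delta(\gamma(a),\gamma(b))$. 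Thus the midpoint property restricts to every subsegment. Using this I would first prove, by induction on $k$, that $\delta(\gamma(\tfrac{i}{2^k}),\gamma(\tfrac{i+1}{2^k}))=2^{-k}\,\delta(\gamma(0),\gamma(1))$ for all $i$: each level-$(k{+}1)$ gap is one half of a level-$k$ gap, whose value is halved by the restricted midpoint property, and the case $k=0$ is trivial. Write $F=\delta(p,q)=\delta(\gamma(0),\gamma(1))$.

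Next I would establish $\delta(\gamma(\mu),\gamma(\lambda))=(\lambda-\mu)F$ for dyadic $0\le\mu\le\lambda\le1$ by a two-sided estimate. Choosing $k$ with $\mu,\lambda\in 2^{-k}\mathbb{Z}$ and summing the consecutive-gap values via the triangle inequality gives $\delta(\gamma(\mu),\gamma(\lambda))\le(\lambda-\mu)F$, and in the same way $\delta(\gamma(0),\gamma(\mu))\le\mu F$ and $\delta(\gamma(\lambda),\gamma(1))\le(1-\lambda)F$. For the reverse inequality I would use the triangle inequality the other way: $F=\delta(\gamma(0),\gamma(1))\le\delta(\gamma(0),\gamma(\mu))+\delta(\gamma(\mu),\gamma(\lambda))+\delta(\gamma(\lambda),\gamma(1))$, whence $\delta(\gamma(\mu),\gamma(\lambda))\ge F-\mu F-(1-\lambda)F=(\lambda-\mu)F$, the rearrangement being where finiteness of $F$ is used. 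Finally, for arbitrary dyadic $\mu\le\lambda$ I would reduce to the previous case by working on the segment from $\gamma(-2^K)$ to $\gamma(2^K)$ with $K$ so large that $\mu,\lambda\in[-2^K,2^K]$: its affine length $2^{K+1}$ is a power of two, so every $\gamma(\tau)$ with $\tau\in\mathbb{D}\cap[-2^K,2^K]$ is a dyadic point of the renormalized parametrization; since $\delta(\gamma(-2^K),\gamma(2^K))=2^{K+1}F$ (iterate the midpoint property outward from $\gamma(0)$), the $[0,1]$-case applied to this segment gives $\delta(\gamma(\mu),\gamma(\lambda))=(\lambda-\mu)F$ in general.

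The step I expect to be the real obstacle is the lower bound in the two-sided estimate: unlike the upper bound it is not a direct triangle-inequality estimate, and it genuinely requires $\delta$ to be finite — for infinite weak metrics the identity can fail (on $X=\mathbb{R}$ the weak metric $\delta(x,y)=|y-x|$ when $y-x\in 3\mathbb{D}$ and $\delta(x,y)=\infty$ otherwise satisfies the midpoint property and has $\delta(0,1)=\infty$ but $\delta(0,\tfrac34)=\tfrac34$). Accordingly I would carry out the proof under the (harmless, and evidently intended) assumption that $\delta$ is finite, or read the displayed identity as a statement constraining only pairs at finite distance; everything else in the argument above goes through verbatim, including the reversibility-free handling of the orientation, since only the midpoint property and the triangle inequality are used.
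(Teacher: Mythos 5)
Your argument is correct and is essentially the paper's own proof: both rest on equal consecutive dyadic gaps obtained by iterating the midpoint property, the triangle inequality for the upper bound, and a sandwich over an enclosing segment of known total $\delta$-length for the lower bound; you merely run the sandwich on $[0,1]$ and then scale up to $[-2^K,2^K]$, whereas the paper runs it on the integer points of $[-2^m,2^m]$ and then scales down by $2^{-k}$. Your observation that the lower bound, and hence the stated equivalence, genuinely requires $\delta(p,q)<\infty$ --- with your $3\mathbb{D}$ example witnessing the failure for an infinite weak metric --- is a valid point that the paper's proof silently glosses over (its ``all inequalities must be equalities'' step also needs finiteness), and your fix of assuming $\delta$ finite, as in the theorem where the proposition is applied, is the right one.
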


\begin{proof}  It is obvious that if (\ref{eq.dydprop}) holds then $\delta$ satisfies the midpoint property.
The proof of the other direction requires several steps. Assume that $\delta$ satisfies the midpoint property. Then we have
$$
  \delta(p,\gamma(\tfrac{1}{2})) = \frac{1}{2}\delta (p,q)  \qquad \text{and} \qquad
  \delta(p,\gamma(2) = 2\delta (p,q).
$$
By an induction argument we then have
\begin{equation}\label{eq.px1}
  \delta(p,\gamma(2^m)) = 2^m\delta (p,q)
\end{equation}
for any $k \in \mathbb{N}$. Because $p$ is the midpoint of $\gamma(-2^m)$ and  $\gamma(2^m)$,
we deduce that
\begin{equation}\label{eq.px1}
  \delta(\gamma(-2^m),\gamma(2^m)) = 2^{m+1}\delta (p,q).
\end{equation}
Now we have for $k\in \mathbb{Z}$
$$
  \delta(\gamma(k-1),\gamma(k)) =  \delta(\gamma(k),\gamma(k+1)) 
$$
since $\gamma(k)$ is the midpoint of $\gamma(k-1)$ and $\gamma(k+1)$.
Because $\delta (p,q) =  \delta(\gamma(0),\gamma(1))$, we deduce that
$$
  \delta(\gamma(k),\gamma(k+1)) = \delta (p,q),
$$
and by the triangle inequality we have
\begin{equation}\label{eq.px2}
  \delta(\gamma(i),\gamma(j)) \leq  (j-i)\delta (p,q)
\end{equation}
for any $i,j \in \mathbb{Z}$ with $i<j$. We will show that this inequality is in fact an equality.
Choose $m\in \mathbb{N}$ with $2^m \geq \max (|i|,|j|))$. Then we have from (\ref{eq.px1}) and 
 (\ref{eq.px2}) 
 \begin{eqnarray*}
2^{m+1}\delta(p,q) &=&  \delta(\gamma(-2^{m}), \gamma(2^m)) 
\\ &\leq&
  \delta(\gamma(-2^{m}), \gamma(i)) +  \delta(\gamma(i), \gamma(j))
  + \delta(\gamma(j), \gamma(2^m)).
\end{eqnarray*}

Using now (\ref{eq.px2}) we have $\delta(\gamma(i),\gamma(j)) \leq  (j-i)\delta (p,q)$, but also
$$
  \delta(\gamma(-2^{m}), \gamma(i))   \leq (i+2^{m}) \delta (p,q),
$$
and
$$
 \delta(\gamma(j), \gamma(2^m))  \leq   (2^{m}-j) \delta (p,q).
$$
Since 
$$
  (i+2^{k}) + (j-i) +(2^{m}-j)  = 2^{m+1}
$$
all the above inequalities must be equalities. Thus, we have established that
\begin{equation}\label{eq.px3}
 \delta(\gamma(i), \gamma(j))  = (j-i) \delta(p,q)
\end{equation}
for any $i,j  \in \mathbb{Z}$.

\medskip

Let us now fix $k \in \mathbb{N}$ and set $q_k = \gamma(2^{-k})$ and 
$$
  \gamma_k(t) = \gamma(t2^{-k}) = tp + (1-t)q_k.
$$
Applying  (\ref{eq.px3}) to $\gamma_k$ we have 
$$
 \delta(\gamma_k(i), \gamma_k(j))  = (j-i) \delta(p,q_k) = (j-i)2^{-k}\delta(p,q).
$$ 
The latter  can be rewritten as
 $$
   \delta(\gamma(\tfrac{i}{2^k}), \gamma(\tfrac{j}{2^k})) = (\tfrac{j}{2^k}-\tfrac{i}{2^k})\delta(p,q)
 $$
 for any $i,j  \in \mathbb{Z}$ and $k \in \mathbb{N}$, which  is equivalent to (\ref{eq.dydprop})
 for any dyadic number  $\mu,\lambda$  with  $\mu \leq \lambda$. 
 \end{proof}
    
The next result is a generalization to the case of weak metrics of a characterization of
Minkowski geometry due to  H. Busemann in \cite[\S 17]{Busemann1955}.

\begin{theorem}
 A finite weak metric $\delta$ on $\r^n$ is a weak Minkowski metric if and only if it satisfies 
 the midpoint property and if its restriction to every  affine line is continuous. More precisely, the latter condition means that if $a$ and $b$ are two points in $\r^n$, then for any $t_0 \in\r$ we have
 $$
  \lim_{t \to t_0}\delta(\gamma (t), b)) =  \delta(\gamma (t_0), b))
 $$
 and 
 $$
  \lim_{t \to t_0}\delta(a,\gamma (t)) =  \delta(a,\gamma (t_0)),
 $$
 where $\gamma (t) = ta + (1-t)b$.
\end{theorem}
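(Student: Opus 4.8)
The plan is to prove the two implications separately; the forward one is routine and the substance is entirely in the converse.

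\emph{Forward implication.} If $\delta$ is a finite weak Minkowski metric, then by Proposition~\ref{proprieteF} the function $F(x)=\delta(0,x)$ is a weak Minkowski norm and translation invariance gives $\delta(x,y)=F(y-x)$; since $\delta$ is finite so is $F$, and by Proposition~\ref{prop.continuity} (here $D_F=\r^n$) the norm $F$ is continuous. The midpoint property is then immediate, since for $m=\tfrac12(p+q)$ one has $\delta(p,m)=F\big(\tfrac12(q-p)\big)=\tfrac12 F(q-p)=\tfrac12\delta(p,q)$ and likewise $\delta(m,q)=\tfrac12\delta(p,q)$; and continuity along an affine line follows because $\delta(\gamma(t),b)=F\big(t(b-a)\big)$ and $\delta(a,\gamma(t))=F\big((1-t)(b-a)\big)$ for $\gamma(t)=ta+(1-t)b$, both continuous in $t$.

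\emph{Converse, Step 1: projectivity.} Assume now that $\delta$ is a finite weak metric on $\r^n$ with the midpoint property whose restriction to every affine line is continuous. By Proposition~\ref{prop.distpr} the midpoint property yields $\delta(\gamma(\mu),\gamma(\lambda))=(\lambda-\mu)\,\delta(p,q)$ for all \emph{dyadic} $\mu\le\lambda$, where $\gamma(t)=(1-t)p+tq$. Letting first dyadic $\lambda_k\to\lambda$ and then dyadic $\mu_k\to\mu$, and using continuity of $\delta$ along the line through $p$ and $q$ in one variable at a time, this identity extends to all \emph{real} $\mu\le\lambda$. Taking $\mu,\lambda\in[0,1]$ and $y=\gamma(t_0)\in[p,q]$ gives $\delta(p,y)+\delta(y,q)=\delta(p,q)$, so $\delta$ is projective. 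In particular $\delta$ is affine along every line: for $\ell(t)=c+td$ one has $\delta(\ell(s),\ell(t))=(t-s)\,\delta(\ell(0),\ell(1))$ for $s\le t$, so the ``forward speed'' of a line depends only on the line, not on which two of its points are chosen. From projectivity alone one also gets, for $F(x):=\delta(0,x)$, that $F(\lambda x)=\lambda F(x)$ for $\lambda\ge0$ and that $\delta(x,y)=F(y-x)$ already holds whenever $0,x,y$ are collinear.

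\emph{Converse, Step 2: translation invariance.} It remains to prove $\delta(p,p+v)=F(v)$ for all $p,v$, i.e. that the forward speed of a line of direction $v$ is the same for all lines of that direction; this is the heart of the matter and the step I expect to be delicate. I would argue as follows. First, if two points move along parallel lines with constant displacement, then $\delta$ between them is a continuous function of the parameter --- this is a short consequence of the triangle inequality together with the separate-variable continuity of $\delta$ along lines. Hence, for fixed $v$, the function $\psi(p):=\delta(p,p+v)$ is continuous along every affine line and, by Step~1, constant along every line of direction $v$. Second --- and this is the crux --- one shows that $\psi$ is affine along every line (equivalently, given the continuity just proved, that $\psi\big(\tfrac12(a+b)\big)=\tfrac12\big(\psi(a)+\psi(b)\big)$), exploiting that the point $\tfrac12(a+b)+\tfrac12 v$ is simultaneously the affine midpoint of the segment $[m,m+v]$ with $m=\tfrac12(a+b)$, of the segment joining the midpoints of $[a,a+v]$ and $[b,b+v]$, and of \emph{both} diagonals of the parallelogram with vertices $a,b,b+v,a+v$; the various midpoint identities this produces are then to be combined with the triangle inequality and the continuity along lines. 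This is exactly the point at which one follows Busemann's reasoning in \cite[\S17]{Busemann1955}, and where the real work lies; the bookkeeping of how the midpoint identities interact across the two transverse directions is the main obstacle. Granting it, $\psi$ is affine on every line, hence affine on $\r^n$, and being everywhere nonnegative it must be constant; therefore $\psi\equiv\psi(0)=F(v)$. Thus $\delta(x,y)=F(y-x)$ for all $x,y$; then $F$ is a finite weak Minkowski norm by Proposition~\ref{proprieteF}, so $\delta=\delta_F$ is a weak Minkowski metric, which completes the proof.
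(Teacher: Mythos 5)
Your forward implication and your Step 1 (projectivity via Proposition \ref{prop.distpr}, continuity along lines, and density of the dyadics) are correct and coincide with the paper's argument. The problem is Step 2: the entire content of translation invariance is concentrated in the assertion that $\psi(p)=\delta(p,p+v)$ is midpoint-affine along every line, and you do not prove it --- you describe the configuration of midpoints in the parallelogram $a,b,b+v,a+v$ and then write ``Granting it''. That is precisely the non-trivial step, and it is not clear your route closes up for a non-reversible weak metric: the midpoint property gives exact identities only along a single line, while every attempt to estimate $\delta(a,b+v)$ or $\delta(m,m+v)$ by the triangle inequality across the parallelogram reintroduces a term such as $\delta(a+v,b+v)$ or $\delta(a,b)$ whose translation-invariance is exactly what you are trying to establish; the obvious bookkeeping is circular. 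As it stands the proof has a genuine gap at its central point.

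For comparison, the paper avoids the midpoint bookkeeping entirely by an asymptotic argument. Given a nondegenerate parallelogram $pqq'p'$ with $q'-p'=q-p$ and $0<\delta(p,q)<\infty$, it takes points $y_j$ on the ray from $p$ through $q$ with $\delta(p,y_j)\to\infty$ (possible by the ratio identity of your Step 1), and observes that the two triangle inequalities $\delta(p,y_j)-\delta(p,p')\le\delta(p',y_j)\le\delta(p',p)+\delta(p,y_j)$ force $\delta(p',y_j)/\delta(p,y_j)\to 1$, because the fixed finite quantity $\delta(p,p')$ is swamped as $j\to\infty$. Transporting this ratio along the segment from $p'$ to $y_j$ down to the points $x_j=L_{p'y_j}\cap L_{qq'}$, which converge to $q'$, and invoking the assumed continuity of $\delta$ on the line $L_{qq'}$, yields $\delta(p',q')=\delta(p,q)$; the degenerate case $\delta(p,q)=0$ is then handled separately by exchanging the roles of the two pairs. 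To complete your write-up you should either carry out the midpoint computation in full, verifying that it survives the lack of symmetry of $\delta$, or replace your Step 2 by this limiting argument.
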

 
\begin{proof} 
 If $\delta$ is a Minkowski metric, then it is projective and since $\delta$ is finite (by hypothesis),  it follows
 from  Propositions   \ref{proprieteF} and \ref{prop.continuity} that the distance is given by
 $$
   \delta(x,y) = F(y-x),
 $$
 where $F$ is a weak Minkowski norm. The continuity of $\delta$ follows now from  Proposition \ref{prop.continuity} 
 and the midpoint property follows from property (ii) in Proposition \ref{proprieteF}.

\smallskip

Conversely , let us assume that the weak metric $\delta$ satisfies the midpoint property and that it is continuous on every line.  We need to show that $\delta$ is  projective and translation invariant.
 
 We first observe that \emph{if $a,b \in \r^n$ are two distinct points with $\delta (a,b) \neq 0$
 and if $x$ and $y$ are two points aligned with 
 $a$ and $b$ such that  $(y-x)$ is a nonnegative multiple of $(b-a)$, then 
\begin{equation}\label{eq.distratio}
   \frac{\delta(x,y)}{\delta(a,b)} = \frac{|y-x|}{|b-a|},
\end{equation}
 where $|q-p|$ denotes the Euclidean distance between $p$ and $q$ in $\r^n$}. This follows   from 
 Proposition \ref{prop.distpr}, together with the continuity of $\delta$ on lines and the density of $\mathbb{D}$
 in $\r$.

 This immediately implies that  $\delta(p, z) +  \delta(z,q) =\delta(p,q)$ whenever $z  \in [p,q]$, meaning that the weak metric $\delta$ is projective.

 \smallskip

To prove the translation invariance, we consider four points $p,q,p',q'$ with $(q'-p') = (q-p)$.
If the four points are on a line, then (\ref{eq.distratio}) implies that $\delta(p',q') = \delta(p,q)$.
If  the four points are not on a line, then $pqq'p'$ is a non-degenerate  parallelogram.
 Assume also $0<\delta(p,q) < \infty$ and denote by $L^+_{pq}$ the ray
 with origin $p$ through $q$ and $L_{qq'}$ the line passing through  $q$ and  $q'$. 
 Choose a sequence $y_j \in L^+_{pq}$ such
 that $|y_j-p| \to \infty$ and  set  $x_j = L_{p'y_j} \cap L^+_{qq'}$. 

\bigskip

\psset{xunit=0.7cm,yunit=0.7cm,algebraic=true,dotstyle=o,dotsize=3pt 0,linewidth=0.8pt,arrowsize=3pt 2,arrowinset=0.25}
\begin{pspicture*}(-4,-1.0)(6,5.1)
\pspolygon(0,0)(3,0)(4,2)(1.03,2.02)
\psline(0,0)(3,0)
\psline(3,0)(4,2)
\psline(4,2)(1.03,2.02)
\psline(1.03,2.02)(0,0)
\psplot{0}{7.4}{(-0--2.02*x)/1.03}
\psplot{3}{7.4}{(-6--2*x)/1}
\psplot{0}{7.4}{(-0--4.36*x)/5.18}
\psline(1.03,2.02)(4,2)
\psline(0,0)(3,0)
\begin{scriptsize}
\psdots[dotstyle=*](0,0)
\rput(-0.11,-0.33){$p'$}
\psdots[dotstyle=*](3,0)
\rput(3.02,-0.33){$p$}
\psdots[dotstyle=*](4,2)
\rput(4.21,1.9){$q$}
\psdots[dotstyle=*](1.03,2.02)
\rput(0.78,2.03){$q'$}
\psdots[dotstyle=*](5.18,4.36)
\rput(5.5,4.22){$y_j$}
\psdots[dotstyle=*,linecolor=darkgray](2.39,2.01)
\rput(2.26,2.34){$x_j$}
\end{scriptsize}
\end{pspicture*}

We then have
 $$
  1- \frac{\delta(p,p')}{\delta(p,y_j)} =  \frac{\delta(p,y_j) - \delta(p,p')}{\delta(p,y_j)} \leq  
  \frac{\delta(p',y_j)}{\delta(p,y_j)}    \leq   \frac{\delta(p',p) + \delta(p,y_j)}{\delta(p,y_j)} 
  = \frac{\delta(p',p)}{\delta(p,y_j)} + 1.
 $$
Using (\ref{eq.distratio}), we have  $\delta(p,y_j) \to \infty$, therefore
$$
      \lim_{j \to \infty} \frac{\delta(p',x_j)}{\delta(p,q)} 
     = \lim_{j \to \infty} \frac{\delta(p',y_j)}{\delta(p,y_j)} = 1.
$$ 
Because $x_j \to q'$ on the line $L_{qq'}$, we have by hypothesiss
$$
    \lim_{j \to \infty} \delta(x_j,q') = \lim_{j \to \infty} \delta(q',x_j) =  0,
$$ 
and since
$$
 \delta (p',q') - \delta(x_j,q') \leq \delta(p',x_j) \leq  \delta (p',q') +\lim_{j \to \infty} \delta(q',x_j),
$$ 
we have  $\delta(p',x_j) \to \delta(p',q')$. Therefore
  $$
     \frac{\delta(p',q')}{\delta(p,q)} =\lim_{j \to \infty} \frac{\delta(p',x_j)}{\delta(p,q)} = 1.
  $$  
It follows that for a nondegenerate parallelogram $p,q,q',p'$, we have $\delta(p',q') = \delta(p,q)$.

Suppose now that $\delta(p,q) = 0$. Then we also have $\delta(p',q') = 0$ for otherwise, exchanging the role
of $p,q$ and $p',q'$ in the previous argument, we get a contradiction.

We thus have established that in all cases  $\delta(p',q')  = \delta(p,q)$  if  $q'-p' = q-p$. In other words,  
$\delta$ is translation invariant. Since it is projective, this completes the proof that it is a weak Minkowski metric.
\end{proof}

 \begin{example}[Counterexample]
 Let $X$ a be real vector space and let $h : X \to \r$ be an injective  $\mathbb{Q}$-linear map.  
 Then the function $\delta : X \times X \to \r$ defined by
 $$
  \delta (x,y) = |h(x)-h(y)|
 $$
 is a  metric which is translation invariant and satisfies the midpoint property.
 Yet it is in general not projective    (unless  $h$ is  $\r$-linear, and thus $\dim_{\r}(X) = 1$).
\end{example}

\section{Strictly and strongly convex Minkowski norms}

\begin{definition}
(i) Let $F$ be a (finite and separating) Minkowski norm in $\r^n$ with unit ball $\Omega_F$. Then $F$ is said to be
\emph{strictly convex}\index{strictly convex Minkowski norm}\index{Minkowski norm!strictly convex}  if  the indicatrix $\partial \Omega_F$ contains no non trivial segment, that is, if for any $p,q \in \partial \Omega_F$, we have 
$$
  [p,q] \subset \partial \Omega \, \Rightarrow \  p=q.
$$
(ii) The function $F$ is said to be \emph{strongly convex}\index{strongly convex Minkowski norm}\index{Minkowski norm!strongly convex}   if $F$ is smooth on $\r^n \setminus \{ 0\}$ and the hypersurface $\partial \Omega_F \subset \r^n$
has everywhere positive Gaussian curvature. Equivalently, the Hessian 
\begin{equation}\label{eq.hessianM}
\g_{y}(\eta_1,\eta_2) = \frac{1}{2}\left.\frac{\partial^2}{\partial u_1\partial u_2}\right|_{u_1=u_2=0} F^2(y+u_1\eta_1+u_2\eta_2)
\end{equation}
 of $F^2(y)$ is positive definite for any point $y \in \r^n \setminus \{ 0\}$. 
\end{definition}

There are several equivalent definitions of strict convexity in Minkowski spaces, see e.g. \cite{Day,Minkowski}.

It is clear that a strongly convex Minkowski norm is strictly convex. The converse does not hold: the $L_p$-norm
$$
 \| y \|_p = \left( \sum_{j=1}^n |y_j|^p\right)^{1/p}
$$
is an example of a smooth strictly convex norm which is not strongly convex.

\begin{proposition}
 Let $F$  be a strongly convex Minkowski norm on $\r^n$. Then $F$ can be recovered from its Hessian via the formula
\begin{equation}\label{eq.Fgg}
   F(y) = \sqrt{\g_{y}(y,y)}
\end{equation}
 where $\g_y$ is  defined by (\ref{eq.hessianM}). 
\end{proposition}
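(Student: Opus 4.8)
The plan is to compute $\g_y(y,y)$ directly from the definition \eqref{eq.hessianM} by exploiting the positive $1$-homogeneity of $F$, which is one of the two defining properties of a weak Minkowski norm (Definition \ref{def.WMN}, property (ii)) and which transfers to $F^2$ as $2$-homogeneity: $F^2(\lambda z) = \lambda^2\, F^2(z)$ for every $\lambda \geq 0$ and every $z \in \r^n$.

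First I would fix $y \in \r^n \setminus \{0\}$ (at $y=0$ there is nothing to prove, and $\g_0$ is not even defined) and substitute $\eta_1 = \eta_2 = y$ into \eqref{eq.hessianM}. Since $y + u_1 y + u_2 y = (1 + u_1 + u_2)\,y$ and $1 + u_1 + u_2 > 0$ for $(u_1,u_2)$ in a neighbourhood of the origin, the $2$-homogeneity of $F^2$ yields
$$
 F^2(y + u_1 y + u_2 y) = (1 + u_1 + u_2)^2\, F^2(y).
$$
By hypothesis $F$ is smooth away from the origin, so the right-hand side is a smooth function of $(u_1,u_2)$ and we may take the mixed second partial derivative at $u_1 = u_2 = 0$:
$$
 \left.\frac{\partial^2}{\partial u_1 \partial u_2}\right|_{u_1 = u_2 = 0} (1 + u_1 + u_2)^2 = 2 .
$$
Hence $\g_y(y,y) = \tfrac{1}{2}\cdot 2\cdot F^2(y) = F^2(y)$, and taking the non-negative square root (legitimate since $F \geq 0$) gives $F(y) = \sqrt{\g_y(y,y)}$, which is \eqref{eq.Fgg}.

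There is essentially no serious obstacle here; the only points requiring a word of care are that the homogeneity identity $F(\lambda z) = \lambda F(z)$ holds only for $\lambda \geq 0$, so one must first observe that $1 + u_1 + u_2$ remains positive near the origin before invoking it, and that the strong convexity hypothesis enters solely to guarantee that $F^2$ is of class $C^2$ near $y$, so that the bilinear form $\g_y$ in \eqref{eq.hessianM} is well defined. Alternatively, the same conclusion follows from Euler's identity for homogeneous functions applied twice to $F^2$, which gives $\sum_{i,j}\partial_i\partial_j(F^2)(y)\,y_i y_j = 2\,F^2(y)$; but the direct substitution above avoids any appeal to it.
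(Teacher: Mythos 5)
Your proof is correct. The only point worth double-checking is the one you already flag: for $y\neq 0$ and $(u_1,u_2)$ near the origin, the points $(1+u_1+u_2)y$ stay in $\r^n\setminus\{0\}$, where $F$ is smooth by hypothesis, so the mixed partial in \eqref{eq.hessianM} may be computed on the explicit function $(1+u_1+u_2)^2F^2(y)$; the rest is the elementary derivative $\partial^2_{u_1u_2}(1+u_1+u_2)^2=2$ and the factor $\tfrac12$ in the definition of $\g_y$.

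Your route differs slightly from the paper's. The paper deduces the identity by applying the Euler Lemma (Lemma \ref{lem.euler}) twice to the $2$-homogeneous function $F^2$: the first application gives $2F^2(y)=\sum_i y^i\partial_i F^2(y)$, and since each $\partial_i F^2$ is $1$-homogeneous a second application gives $\sum_{i,j}y^iy^j\partial_i\partial_jF^2(y)=2F^2(y)$, i.e.\ $\g_y(y,y)=F^2(y)$. You instead restrict $F^2$ to the segment of the ray through $y$ parametrized by $(1+u_1+u_2)$ and differentiate directly; this is a self-contained one-line computation that uses only the definition \eqref{eq.hessianM} and positive homogeneity, and avoids identifying $\g_y$ with the coordinate Hessian. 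What the paper's approach buys is that the Euler Lemma is stated and proved anyway and is reused immediately afterwards (to show $\g_{\lambda y}=\g_y$), so deriving \eqref{eq.Fgg} from it keeps the section's machinery uniform; what your approach buys is brevity and independence from that lemma. You correctly note the restriction $\lambda\geq 0$ in the homogeneity of $F$, which is the one place a careless argument could go wrong.
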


\medskip

This result follows from applying twice  the following Lemma, which is sometimes  called the \emph{Euler Lemma}.\index{Euler lemma}\index{lemma!Euler}

\begin{lemma}\label{lem.euler}
Let $\psi : \r\setminus {0}\to \r$ be a positively homogeneous functions of degree $r$. If $\psi$ is of class $C^k$ for some $k\geq 1$, then the partial derivatives $\frac{\partial \psi}{\partial y^i}$ are positively homogenous functions of degree $r-1$ and
$$
  r\cdot \psi(y) = \sum_{i=1}^n y^i\frac{\partial \psi}{\partial y^i}.
$$
In particular $y^i\frac{\partial \psi}{\partial y^i} = 0$ if $\psi$ is $0$-homogenous.
\end{lemma}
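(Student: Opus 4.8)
The plan is to exploit the defining homogeneity relation
$$
\psi(\lambda y) = \lambda^r \psi(y), \qquad \lambda > 0, \ y \in \r^n \setminus \{0\},
$$
and to differentiate it in two complementary ways: once with respect to the spatial variables $y^i$, holding the scaling factor $\lambda$ fixed, and once with respect to the scaling factor $\lambda$, holding $y$ fixed.

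First I would fix $\lambda > 0$ and differentiate both sides of the homogeneity relation with respect to $y^i$. Since $y \mapsto \lambda y$ is linear and $\psi$ is of class $C^1$, the chain rule gives $\lambda\, \frac{\partial \psi}{\partial y^i}(\lambda y)$ on the left-hand side and $\lambda^r \frac{\partial \psi}{\partial y^i}(y)$ on the right-hand side. Dividing by $\lambda$ yields
$$
\frac{\partial \psi}{\partial y^i}(\lambda y) = \lambda^{r-1}\,\frac{\partial \psi}{\partial y^i}(y),
$$
which is exactly the assertion that each partial derivative is positively homogeneous of degree $r-1$. Here one uses that $\lambda y \neq 0$ whenever $y \neq 0$ and $\lambda > 0$, so that both sides are defined on the domain of $\psi$.

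Next I would instead differentiate the homogeneity relation with respect to the scalar variable $\lambda$, treating $y$ as fixed. By the chain rule the left-hand side becomes $\sum_{i=1}^n y^i \frac{\partial \psi}{\partial y^i}(\lambda y)$, while the right-hand side becomes $r\lambda^{r-1}\psi(y)$. Evaluating the resulting identity at $\lambda = 1$ gives Euler's identity
$$
\sum_{i=1}^n y^i \,\frac{\partial \psi}{\partial y^i}(y) = r\,\psi(y),
$$
and the last assertion of the lemma is simply the special case $r=0$.

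There is essentially no serious obstacle: the only point requiring care is the legitimacy of differentiating the homogeneity relation, which is guaranteed by the hypothesis $\psi \in C^k$ with $k \geq 1$ together with the fact that the scalings $\lambda y$ remain in the domain $\r^n \setminus \{0\}$; one could alternatively note that the second computation only requires differentiability along rays, which is automatic from the $C^1$ assumption.
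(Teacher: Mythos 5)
Your proposal is correct and follows essentially the same route as the paper: differentiating the relation $\psi(\lambda y)=\lambda^r\psi(y)$ in the scaling parameter and evaluating at $\lambda=1$ to obtain Euler's identity. You additionally write out the differentiation in the $y^i$ variables to establish the homogeneity of degree $r-1$ of the partial derivatives, a step the paper leaves implicit.
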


Recall that a function $\psi : \r^n \setminus {0}\to \r$ is
\emph{positively homogenous} of degree $r$ if $\psi(\lambda y) = \lambda^r \psi(y)$ for all $y \in  \r^n\setminus {0}$ and all $\lambda >0$.

\begin{proof}
This is elementary: we just differentiate the function $t \mapsto \psi(ty) = t^r\psi(y)$ to obtain
$$
  \frac{\partial \psi}{\partial y^i}(ty)\cdot y^i =  rt^{r-1}\cdot \psi(y),
$$
and set $t=1$.
\end{proof}

If $F$ is a strongly convex Minkowski norm on $\r^n$, then Formula (\ref{eq.hessianM}) defines a Riemannian metric  $\g_y$
on $\r^n \setminus \{ 0\}$. Using Lemma \ref{lem.euler}, on gets that $\g_y$ is invariant under homothety,
that is we have $\g_{\lambda y} = \g_y$  for every $\lambda >0$ and $y \in \r^n \setminus \{0\}$. Furthermore $F$ is
determined from this metric  by Equation  (\ref{eq.Fgg}).  
We conclude from these remarks the following:

\begin{proposition}
There is a natural bijection between strongly convex Minkowski norms
on $\r^n$ and  Riemannians metric on $\r^n \setminus \{ 0\}$ which are invariant under homothety.
\end{proposition}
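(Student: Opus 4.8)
The claim is a bijection between strongly convex Minkowski norms $F$ on $\r^n$ and Riemannian metrics $\g$ on $\r^n\setminus\{0\}$ that are invariant under homothety (i.e.\ $\g_{\lambda y}=\g_y$ for all $\lambda>0$). The plan is to exhibit two maps and check they are mutually inverse. In one direction, a strongly convex $F$ produces $\g$ via the Hessian formula (\ref{eq.hessianM}); by definition of strong convexity $\g_y$ is positive definite on $\r^n\setminus\{0\}$, hence a Riemannian metric there, and one checks it is homothety invariant. In the other direction, given a homothety-invariant Riemannian metric $\g$ on $\r^n\setminus\{0\}$, define $F(y)=\sqrt{\g_y(y,y)}$ for $y\neq 0$ and $F(0)=0$; then one must verify that $F$ is a strongly convex Minkowski norm and that these two operations undo one another.

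First I would treat the direction $F\mapsto\g\mapsto F$: this is exactly the content of Proposition \ref{eq.Fgg} (``$F$ can be recovered from its Hessian''), which follows from two applications of the Euler Lemma \ref{lem.euler}. Indeed $F^2$ is $2$-homogeneous and smooth away from $0$, so by Euler's identity $\sum_i y^i\,\partial_i(F^2)=2F^2(y)$; differentiating once more (using that $\partial_i(F^2)$ is $1$-homogeneous) gives $\sum_{i,j}y^iy^j\,\partial_i\partial_j(F^2)=2F^2(y)$, which is precisely $2\g_y(y,y)=2F^2(y)$, i.e.\ $F(y)=\sqrt{\g_y(y,y)}$. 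I would also record here that Lemma \ref{lem.euler} shows $\g$ is homothety invariant: each second partial derivative $\partial_i\partial_j(F^2)$ is $0$-homogeneous, so $\g_{\lambda y}=\g_y$.

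For the other direction $\g\mapsto F\mapsto\g$, start from a homothety-invariant Riemannian metric $\g$ on $\r^n\setminus\{0\}$ and set $F(y)=\sqrt{\g_y(y,y)}$, $F(0)=0$. Positive homogeneity of degree $1$ is immediate: $F(\lambda y)^2=\g_{\lambda y}(\lambda y,\lambda y)=\lambda^2\g_y(y,y)=\lambda^2F(y)^2$ for $\lambda>0$, using $\g_{\lambda y}=\g_y$. Smoothness of $F$ on $\r^n\setminus\{0\}$ follows since $y\mapsto\g_y(y,y)$ is smooth and strictly positive there. The function $F^2$ is then $2$-homogeneous and smooth off the origin, and one computes its Hessian (\ref{eq.hessianM}): the main point is that this recovers $\g_y$. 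Here I would use that $\g_y$ itself arises as the Hessian of the quadratic form $\eta\mapsto\g_y(\eta,\eta)$ and combine this with homothety invariance of $\g$ — the $y$-derivatives of the coefficients $\g_y$ produce terms that cancel precisely because $\g_{\lambda y}=\g_y$ kills the radial derivatives of the coefficients (again Euler's Lemma applied to the $0$-homogeneous functions $y\mapsto g_{ij}(y)$). Since the recovered Hessian equals $\g_y$ which is positive definite by assumption, $F$ is strongly convex; and once $F^2$ is $C^1$ and $2$-homogeneous with positive definite Hessian, the triangle inequality (convexity of $F$) and finiteness and separation are standard, so $F$ is a genuine strongly convex Minkowski norm.

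The main obstacle — and the only genuinely non-routine step — is verifying that for the $F$ built from $\g$, the Hessian (\ref{eq.hessianM}) gives back exactly $\g_y$ and not some other quadratic form. The subtlety is that (\ref{eq.hessianM}) differentiates $F^2(y+u_1\eta_1+u_2\eta_2)$ in the base point $y$, which moves through a one-parameter family of fibers, so one picks up derivatives of the metric coefficients $g_{ij}(y)$ in addition to the ``obvious'' term $\g_y(\eta_1,\eta_2)$. The key is that homothety invariance of $\g$ forces $\sum_k y^k\,\partial_k g_{ij}(y)=0$, and the extra terms organize into exactly such radial-derivative combinations and vanish. I would carry this out by writing $F^2(y)=g_{ij}(y)y^iy^j$ in coordinates, differentiating twice, evaluating at $u_1=u_2=0$, and invoking Euler's Lemma \ref{lem.euler} on the $0$-homogeneous functions $g_{ij}$ to discard the unwanted terms; the surviving term is $g_{ij}(y)\eta_1^i\eta_2^j=\g_y(\eta_1,\eta_2)$, completing the proof that the two constructions are inverse to each other.
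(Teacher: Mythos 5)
Your forward direction ($F \mapsto \g \mapsto F$) is correct and is essentially the paper's argument: two applications of Euler's Lemma give $\g_y(y,y)=F^2(y)$, and the $0$-homogeneity of the second derivatives of $F^2$ gives $\g_{\lambda y}=\g_y$. The problem is the reverse direction, which you rightly single out as the crux but whose key cancellation is wrong. Writing $F^2(y)=g_{kl}(y)\,y^ky^l$ for a homothety-invariant metric with coefficients $g_{ij}$, a direct computation gives
\[
\frac{1}{2}\,\partial_i\partial_j\bigl(F^2\bigr)(y)\;=\;g_{ij}(y)+\partial_i g_{jl}(y)\,y^l+\partial_j g_{il}(y)\,y^l+\frac{1}{2}\,\partial_i\partial_j g_{kl}(y)\,y^ky^l .
\]
Euler's Lemma applied to the $0$-homogeneous functions $g_{ij}$ only yields $y^m\partial_m g_{ij}=0$, i.e.\ it kills terms in which the \emph{derivative} index is contracted with $y$. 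In the correction terms above the derivative indices $i,j$ are free and $y$ sits in the other slots, so these terms do not vanish in general; the claimed cancellation fails.

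In fact the composite $\g\mapsto F\mapsto \g$ cannot be the identity on all homothety-invariant Riemannian metrics. On $\r^2\setminus\{0\}$ take, in polar coordinates, $\g=dr^2+r^2f(\theta)\,d\theta^2$ with $f>0$ non-constant; its Cartesian coefficients are $0$-homogeneous, so $\g$ is homothety invariant. Since the position vector is radial, $\g_y(y,y)=r^2$, hence $F(y)=\|y\|$ independently of $f$, and the Hessian of $F^2$ returns the Euclidean metric, not $\g$. Thus $\g\mapsto\sqrt{\g_y(y,y)}$ is not injective and the Hessian construction is not surjective onto all homothety-invariant Riemannian metrics. What the paper's own (very brief) proof actually establishes is only what your first paragraph does: the assignment $F\mapsto\g$ is well defined, lands in homothety-invariant metrics, and is injective with inverse on its image given by (\ref{eq.Fgg}). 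If the proposition is read literally as a bijection with \emph{all} homothety-invariant Riemannian metrics, the difficulty is in the statement rather than in your write-up; either way, the step you call ``the only genuinely non-routine step'' is exactly where the argument breaks, and no appeal to Euler's Lemma will repair it.
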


This observation can be used as a founding stone for Minkowski geometry, see e.g. \cite{Varga}, and it plays a central role in Finsler Geometry.

\section{The synthetic viewpoint}

Definition \ref{def.wm} of a  weak Minkowski space is based on a real vector space $X$ as a 
ground space. In fact only the affine structure of that space plays a role and we could equivalently 
start with a given affine space instead of a vector space.

The synthetic viewpoint is to start with an abstract metric space and to try to give a list of natural conditions 
implying  the given metric space to be Minkowskian. This question, and similar questions for other geometries, has been a central and recurring question in the work H. Busemann, and it is implicit in Hilbert's comments on his Fourth Problem \cite{Hilbert-Problems}. Some answers are given in his book 
\emph{The Geometry of Geodesics} \cite{Busemann1955}, in that book Busemann introduces the notions of
\emph{$G$-spaces} and  \emph{Desarguesian spaces}. The goal of this section is to give a short account on this viewpoint. We restrict ourselves to the case of ordinary metric spaces.

\begin{definition}[Busemann $G$-space]
A \emph{Busemann $G$-space}\index{Busemann $G$-space}\index{space!Busemann $G$-} is a metric space $(X,d)$, satisfying the following four conditions:
\begin{enumerate}
  \item (Menger Convexity) Given distinct points $x, y \in X$ , there is a point $z \in X$ different from $x$ and $y$ such that $d(x, z) + d(z, y) = d(x, y)$.
  \item (Finite Compactness) Every $d$-bounded infinite set has an accumulation point.
  \item  (Local Extendibility) For every point $p\in X$ , there exists $r_p>0$, such that for any pair of distinct points $x, y\in X$ in the open ball $B(p,r_p)$, there is a point $z\in B(p,r_p)\setminus \{x,y\}$ such that $d(x,y)+d(y,z)=d(x,z)$.
    \item  (Uniqueness of Extension) Let  $x, y,z_1,z_2$ be four points in $X$ such that
    $d(x,y)+d(y,z_1)=d(x,z_1)$ and  $d(x,y)+d(y,z_2)=d(x,z_2)$.
    Suppose that  $d(y, z_1) = d(y, z_2)$, then $z_1 = z_2$.
\end{enumerate}
\end{definition}
A typical example of a Busemann $G$-space $(X,d)$ is a strongly convex Finsler manifold of class $C^2$  (and in fact of class $C^{1,1}$ by a result of Pogorelov). It follows from the definition that  any pair of points in a Busemann $G$-space $(X,d)$ 
can be joined by a minimal geodesic and that geodesics are locally unique. It is also known
that every $G$-space is topologically homogeneous and that it is a manifold if its dimension 
is at most 4. We refer to \cite{BHR} for further results on the topology of $G$-spaces.

\medskip

Among $G$-spaces, Busemann introduced the class of Desarguesian spaces.

\begin{definition}[Desarguesian space]  A \emph{Desarguesian space}\index{Desarguesian space}\index{space!Desarguesian} is a metric space $(X,d)$ satisfying the following conditions:
\begin{enumerate}
  \item $(X,d)$ is a  a Busemann $G$-space. 
  \item $(X,d)$ is uniquely geodesic, that is every pair of points can be joined by a unique geodesic.
  \item If the topological dimension\footnote{On page 46  in \cite{Busemann1955}, Busemann states that he is using the Menger-Urysohn notion
of dimension, but any reasonable notion of topological dimension is equivalent for a $G$-space.}
 of $X$ equals 2, then Desargues theorem holds for the family of all geodesics.
  \item If  the topological dimension of $X$ is greater than 2, then any triple of points lie in a plane, 
  that is, a two-dimensional subspace of $X$ which is itself a $G$-space.
\end{enumerate}
\end{definition}
The reason for assuming Desargues' property  in the 2-dimensional case as an axiom is due to the well known  
fact from axiomatic geometry that it is possible to construct exotic 2-dimensional objects satisfying the 
axioms of real projective or affine geometry but which are not isomorphic to $\mathbb{RP}^2$ or
$\mathbb{R}^2$ (an example of such exotic object is the Moufang plane); these objects do not satisfy Desargues property.  Similar objects do not exist in higher dimension and Desargues property is in fact a theorem in all dimensions $\geq 3$. 
Condition (3) in the above definition could be rephrased as follows: \emph{If $X$ is 2-dimensional, then it can be isometrically embedded in a 3-dimensional Desarguesian space.}
We refer to \cite{Busemann1955} and \cite{Papadopoulos-Hilbert} for further discussion of Desarguesian spaces.

\medskip

A deep result of Busemann states that a Desarguesian space can be mapped on a real projective space or on a
convex domain in a real affine space with a projective metric. More precisely he proved the following

\begin{theorem}[Theorems 13.1 and 14.1  in \cite{Busemann1955}] \label{th.RepDesargues}
Given an $n$-dimensional  Desarguesian space $(X,d)$, one of the following condition holds:
\begin{enumerate}
  \item Either all the geodesics are topological circles and there is a homeomorphism $\varphi : X \to \mathbb{RP}^n$ that maps every  geodesic in $X$ onto a projective line;
  \item or there is a homeomorphism from  $X$ onto a convex domain $\mathcal{C}$ in $\r^n$  that maps every  geodesic   in $X$ onto the intersection of a straight line with $\mathcal{C}$.
\end{enumerate}
\end{theorem}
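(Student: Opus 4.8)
The plan is to use the metric only through three of its by-products --- the \emph{betweenness relation} (Menger's $d(x,z)+d(z,y)=d(x,y)$), the \emph{topology} (with local compactness supplied by finite compactness), and the \emph{unique prolongation of geodesics} --- and then to argue essentially within the foundations of projective and affine geometry. First I would make precise the notion of a \emph{geodesic line}: combining Menger convexity with local extendibility and uniqueness of extension, every geodesic segment prolongs, and since $(X,d)$ is uniquely geodesic this prolongation is unique, so through each pair of distinct points passes exactly one maximal geodesic $g$, which is a properly embedded copy of $\r$ or a simple closed curve. The first point to settle is that the type of $g$ is independent of $g$: since $G$-spaces are topologically homogeneous and the family of geodesics is connected, the occurrence of a closed geodesic is a ``locally constant'' phenomenon, so one is globally in \emph{Case (1)} (all maximal geodesics are topological circles) or globally in \emph{Case (2)} (all are lines).

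Next I would build the incidence structure $(X,\mathcal{L})$ whose points are the points of $X$, whose lines are the maximal geodesics, and whose higher flats are the $k$-dimensional $G$-subspaces furnished inductively by axiom~(4) of a Desarguesian space (the $2$-dimensional ones being the ``planes''). The axioms of a Desarguesian space are tailored so that this is an $n$-dimensional Desarguesian projective (Case~1) or affine (Case~2) incidence space: two points lie on a unique line, any three points lie in a plane, and Desargues' theorem holds --- in dimension $2$ by hypothesis (this is exactly why the axiom is needed there, since non-Desarguesian planar models genuinely exist), and in dimension $\geq 3$ automatically, every Desargues configuration being contained in a suitable $3$-dimensional subspace. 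By the fundamental theorem of projective geometry, such a space of dimension $\geq 2$ is coordinatisable: it is incidence-isomorphic to $\mathbb{P}^{n}(k)$ in Case~(1), and to the affine space $k^{n}\subset \mathbb{P}^{n}(k)$ in Case~(2), for some division ring $k$, the isomorphism carrying maximal geodesics to projective, resp.\ affine, lines.

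It then remains to identify $k$ and to upgrade the algebraic isomorphism to a homeomorphism carrying \emph{every} geodesic to a line. Here the metric re-enters. Betweenness along a geodesic equips each line, hence $k$, with a dense linear order compatible with the field operations, so $k$ is an ordered division ring; finite compactness forces $X$, and thus $k$, to be locally compact; and $G$-spaces are connected and separable. A locally compact connected ordered division ring is $\r$ (by Pontryagin's classification, or directly: the order completeness of bounded intervals inherited from finite compactness plus the Archimedean property give $k=\r$). Transporting the metric topology through the coordinatisation and invoking the continuity of $d$ along lines together with local compactness, the coordinate chart is a homeomorphism onto $\mathbb{RP}^{n}$ in Case~(1). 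In Case~(2) one produces the projective completion intrinsically, taking the ``points at infinity'' to be the ends of geodesics; adjoining them gives $\mathbb{RP}^{n}$ with $X$ mapped onto the complement of a hyperplane, an affine $\r^{n}$, whose image $\mathcal{C}$ of $X$ is convex because the geodesic joining two of its points is a segment lying entirely in $X$, and the geodesics are exactly the chords $\ell\cap\mathcal{C}$ of straight lines $\ell$.

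The main obstacle --- the part that genuinely requires work rather than bookkeeping --- is the coordinatisation step together with the identification $k=\r$: one must manufacture the coordinate (ternary) ring from purely synthetic incidence data, check that Desargues yields associativity and both distributive laws, and then show that the order and compactness inherited from $d$ pin $k$ down to the reals and make the coordinate map a homeomorphism compatible with \emph{all} geodesics simultaneously, not merely within one coordinate patch. Handling the dimension-$2$ case, where the Desargues axiom is indispensable, and ensuring that the planar coordinatisations glue consistently in dimension $\geq 3$, are the delicate points; by contrast, the dichotomy between Cases~(1) and~(2), the end-compactification, and the convexity of $\mathcal{C}$ are comparatively routine once the coordinatisation is in hand.
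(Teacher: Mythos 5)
First, note that the paper does not prove this statement at all: it is quoted verbatim as Theorems 13.1 and 14.1 of Busemann's \emph{The Geometry of Geodesics} \cite{Busemann1955}, so there is no in-paper argument to compare against. Your overall strategy --- extract the incidence structure from betweenness, coordinatise via Desargues, use the order and finite compactness to force the coordinate division ring to be $\r$, then recover the topology --- is indeed the classical route that Busemann's own proof follows, and the dichotomy between the great-circle case and the straight case is the right first step (though your ``locally constant phenomenon'' argument for it is a gesture, not a proof; in Busemann's development the fact that one closed geodesic forces all geodesics to be closed of the same length is itself a theorem).

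There is, however, a genuine gap in your treatment of Case (2), and it contradicts the very statement you are proving. You assert that in the straight case the incidence structure $(X,\mathcal{L})$ is ``incidence-isomorphic to the affine space $k^{n}$.'' If that were so, the conclusion would be $\mathcal{C}=\r^{n}$; but the theorem explicitly allows $\mathcal{C}$ to be a \emph{proper} convex domain, and it must, since Hilbert geometries on bounded convex bodies are Desarguesian spaces of straight type. In such a space two coplanar geodesics that ``ought'' to meet may fail to intersect inside $X$, so $(X,\mathcal{L})$ is only a \emph{fragment} of an affine or projective plane: two points determine a line, but the line--line intersection axiom fails, Desargues configurations need not close up inside the space, and the standard ternary-ring coordinatisation cannot be run directly. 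The real content of Busemann's Theorem 14.1 is precisely the embedding of this open incidence fragment into a genuine projective space (adjoining ideal elements built from pencils of geodesics, not merely ``ends of geodesics,'' whose set is the boundary of $\mathcal{C}$ rather than a hyperplane at infinity). Your sketch treats this as bookkeeping, but it is the step where the theorem actually lives; as written, the argument proves a stronger and false intermediate claim and then silently retreats from it when asserting that the image is a convex domain.
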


Using the notion of Desarguesian space and following Busemann, we now give two purely intrinsic characterizations of finite-dimensional  Minkowski spaces among abstract metric spaces.
Note  that a Minkowski space $(X,d)$ is a $G-$space  if and only if its unit ball is strictly convex. The first result is
a converse to that statement.

  \begin{theorem}[\cite{Busemann1955}, Theorem 24.1]  \label{th,dsgr1}  
  A metric space $(X.d)$ is isometric to a Minkowski space if and only if it is a 
  Desarguesian space in which the parallel postulate holds and the spheres are stirctly convex.
 \end{theorem}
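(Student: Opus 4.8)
The plan is to prove both directions of the equivalence, using the representation theorem for Desarguesian spaces (Theorem \ref{th.RepDesargues}) as the central tool.

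\medskip

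\textbf{The easy direction.} Suppose $(X,d)$ is isometric to a finite-dimensional Minkowski space. Then $d$ is given by a finite separating norm $F$ via $d(x,y) = F(y-x)$. Since the straight lines are geodesics and $d$ is projective, one checks directly that $(X,d)$ satisfies the axioms of a Desarguesian space: Menger convexity and local extendibility follow from projectivity (the affine midpoint works), finite compactness follows from the fact that $F$ is equivalent to the Euclidean norm (Equation (\ref{eq.estnorm})), uniqueness of extension follows from strict convexity of the unit ball, and Desargues' theorem holds because the geodesics are literally straight lines in $\r^n$. The parallel postulate holds because through any point not on a given affine line there is a unique affine line in the affine $2$-plane they span that is disjoint from it (translation invariance of $d$ makes the notion of parallel geodesic coincide with the affine notion). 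Finally the spheres are strictly convex by hypothesis (this is precisely the $G$-space condition for Minkowski spaces, as noted just before the statement). So this direction amounts to verifying that the Minkowski axioms imply the Desarguesian axioms plus parallelism, which is mostly bookkeeping.

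\medskip

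\textbf{The hard direction.} Suppose $(X,d)$ is an $n$-dimensional Desarguesian space in which the parallel postulate holds and the spheres are strictly convex. First apply Theorem \ref{th.RepDesargues}: since the parallel postulate holds, case (1) of that theorem (geodesics are topological circles, $X \cong \mathbb{RP}^n$) is excluded — in projective space any two lines in a common plane meet, contradicting parallelism. Hence we are in case (2): there is a homeomorphism $\varphi : X \to \mathcal{C}$ onto a convex domain $\mathcal{C} \subset \r^n$ carrying geodesics of $X$ to intersections of affine lines with $\mathcal{C}$. Transport the metric $d$ to $\mathcal{C}$ via $\varphi$; we get a projective metric on $\mathcal{C}$ whose affine-segment geodesics realize the distance, and which is finitely compact and has strictly convex spheres. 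The remaining work is to upgrade the parallel postulate into genuine translation invariance and to show $\mathcal{C} = \r^n$. The idea: use parallelism to define, for each ordered pair $(p,q)$, a ``parallel transport'' map sending a point $p'$ to the unique point $q'$ with the geodesic through $p',q'$ parallel to that through $p,q$ and with the parallelogram $pqq'p'$ closing up; the parallel postulate should force these maps to form a transitive abelian group of isometries acting on $\mathcal{C}$ by affine translations. A group of affine translations acting isometrically and transitively on a convex domain $\mathcal{C}$ forces $\mathcal{C}$ to be all of $\r^n$ (a proper convex set cannot be invariant under all translations in the directions of a spanning set), and then the induced norm $F(x) := d(0,x)$ is translation invariant, finite, projective, hence a Minkowski norm by Proposition \ref{proprieteF}; separation and finiteness follow from finite compactness, and strict convexity of the unit ball from strict convexity of spheres.

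\medskip

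The main obstacle is the middle step: extracting a transitive group of translational isometries purely from the parallel postulate, i.e. showing that the ``parallelogram closure'' operation is well-defined, associative, and distance-preserving. This is exactly the kind of configuration argument where Desargues' theorem does the heavy lifting (closing up parallelograms in higher-dimensional configurations), and where one must be careful that $\mathcal{C}$ being only a convex \emph{domain} rather than all of $\r^n$ does not obstruct the constructions before one has proven $\mathcal{C} = \r^n$ — one works locally first and then propagates. Once translation invariance is in hand, the rest is an appeal to the already-established dictionary between weak Minkowski metrics, norms, and convex bodies.
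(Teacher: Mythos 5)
Your easy direction and your use of Theorem \ref{th.RepDesargues} to rule out the projective case and land in a convex domain $\mathcal{C}\subset\r^n$ both match the intended argument (note, though, that the parallel postulate already forces $\mathcal{C}=\r^n$ directly: in a proper convex domain there are many chords through an external point that miss a given chord, so uniqueness of parallels fails; you do not need the translation group for this). The genuine gap is in what you yourself flag as the middle step. You propose to extract a transitive abelian group of \emph{isometric} translations from the parallel postulate via parallelogram closure, with strict convexity of the spheres demoted to a cosmetic role at the end. This cannot work: once the geodesics are identified with the affine lines, the parallel postulate is a pure incidence statement and carries no metric information, so it cannot tell you that the affine translation taking $p$ to $p'$ preserves $d$. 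Projectively flat metrics on all of $\r^n$ that are not translation invariant exist in abundance (Example \ref{ex.PrFlat} is of this type, and the integral-geometric constructions attached to Hilbert's Fourth Problem provide finitely compact ones); for all of them the geodesics are affine lines and the incidence form of the parallel postulate holds, yet they are not Minkowskian. The hypothesis that does the real work is precisely the strict convexity of the spheres, which is the one you sideline, and no amount of Desarguesian configuration-chasing will substitute for it.

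Busemann's actual route, which the text indicates immediately after the statement, is: the representation theorem plus the parallel postulate give $\r^n$ with a projectively flat metric; strict convexity of the spheres is then used to establish the \emph{midpoint property} (the two spheres of radius $\frac{1}{2}d(p,q)$ around $p$ and $q$ can meet only in the single point $\frac{1}{2}(p+q)$ on the segment); and the characterization theorem of Section 4 (midpoint property plus continuity on lines implies weak Minkowski, via Proposition \ref{prop.distpr}) then delivers translation invariance and projectivity simultaneously. If you want to salvage your proposal, replace the parallelogram-group construction by this midpoint argument; as written, the claim that ``the parallel postulate should force these maps to form a transitive abelian group of isometries'' is essentially the conclusion of the theorem, not a consequence of the incidence axioms.
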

 
Observe that in a Desarguesian space there  are well defined notions of lines and planes and therefore
Euclid's parallel  can be formulated. Using Theorem \ref{th.RepDesargues} and the parallel postulate, we  obtain that  $(X,d)$ is isometric to $\r^n$ with a projectively flat metric. To prove the Theorem, Busemann 
uses the strict convexity of spheres to establish the midpoint property.

\medskip

The next result we state involves the notion of \emph{Busemann zero curvature}.\index{Busemann zero curvature}\index{zero curvature!Busemann} Recall that a geodesic metric space
is said to have \emph{zero curvature in the sense of Busemann}, if the distance between the midpoints of two sides of an arbitrary triangle is equal to half the length of the remaining side. Busemann then formulates the following characterization:
\begin{theorem}[\cite{Busemann1955}, Theorem 39.12]  \label{th,dsgr2}     
  A  simply connected finite-dimensional $G-$space of zero curvature is  isometric to a Minkowski space.  
 \end{theorem}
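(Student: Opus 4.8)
The plan is to reduce Theorem \ref{th,dsgr2} to the previous characterization, Theorem \ref{th,dsgr1}, by showing that a simply connected finite-dimensional $G$-space $(X,d)$ of zero curvature in the sense of Busemann is necessarily a Desarguesian space in which the parallel postulate holds and in which the spheres are strictly convex. Once these hypotheses are verified, Theorem \ref{th,dsgr1} produces an isometry with a Minkowski space and we are done. So the entire content of the proof is the passage \emph{$G$-space of zero Busemann curvature} $\Rightarrow$ \emph{Desarguesian, with parallels and strictly convex spheres}.

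First I would exploit the zero-curvature hypothesis locally. In a $G$-space the midpoint of two nearby points exists and is unique (this follows from Menger convexity together with uniqueness of extension, which forces binary midpoints to be unique); the zero-curvature condition says that the map sending a geodesic triangle to the triangle on the midpoints of its sides halves the side lengths. Iterating the midpoint construction along a geodesic and using finite compactness to pass to limits, one gets that along each geodesic segment the arclength parametrization behaves affinely with respect to the dyadic subdivision — this is exactly the mechanism of Proposition \ref{prop.distpr}, transported to the synthetic setting. The upshot is that small metric balls look, to first order, like the balls of a translation-invariant structure: zero curvature rules out any ``branching'' or ``defect'' and makes the local geometry flat and homogeneous. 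From this one extracts that geodesics can be extended (local extendibility is already assumed) and that the space is locally uniquely geodesic; combined with simple connectedness, a monodromy/covering argument promotes local unique geodesicity to global unique geodesicity, giving condition (2) of the definition of a Desarguesian space.

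Next I would establish the Desargues property itself. In dimension $\geq 3$ this is automatic (Desargues is a theorem), so the work is confined to dimension $2$, where one must verify that the family of geodesics satisfies Desargues' configuration theorem. Here the zero-curvature hypothesis is the key input: flatness lets one reflect the configuration through midpoints and translate pieces of it without distortion, and a careful bookkeeping of which triples of geodesics are concurrent, organized around parallelograms built from midpoints, yields the Desargues incidence. Equivalently, zero curvature gives enough ``affine'' symmetry that the geodesics form a Desarguesian line system in the sense required by Theorem \ref{th.RepDesargues}. Simultaneously, zero curvature produces parallels: given a geodesic $L$ and a point $p\notin L$, translating $L$ by the ``half-turn'' symmetries that flatness supplies yields a geodesic through $p$ disjoint from $L$, and uniqueness of such a parallel follows from unique geodesicity plus the affine picture of Theorem \ref{th.RepDesargues}. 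That $X$ is a $G$-space already forces its spheres to be strictly convex — this is the remark stated just before Theorem \ref{th,dsgr1} — so that hypothesis of Theorem \ref{th,dsgr1} comes for free. Applying Theorem \ref{th,dsgr1} then finishes the proof.

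The main obstacle is the dimension-$2$ Desargues step together with the leap from \emph{local} flatness (what zero Busemann curvature immediately gives, via the midpoint iteration) to the \emph{global} Desarguesian/affine structure: one must show there are no global obstructions — no nontrivial holonomy, no exotic non-Desarguesian plane — and this is exactly where simple connectedness and finite-dimensionality are used. I expect this to require the finite-compactness axiom in an essential way (to build global geodesic extensions by a limiting argument) and a nontrivial topological input to rule out the circle alternative in Theorem \ref{th.RepDesargues}, so that one lands in case (2) — a convex domain in $\r^n$ — rather than in $\mathbb{RP}^n$; zero curvature, which is incompatible with closed geodesics bounding, is what excludes that alternative. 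Everything after reaching ``Desarguesian + parallels + strictly convex spheres'' is then a citation of Theorem \ref{th,dsgr1}.
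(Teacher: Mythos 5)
First, a point of comparison: the paper does not prove this statement at all --- it is quoted from Busemann's book (Theorem 39.12 of \cite{Busemann1955}), so there is no in-paper argument to measure yours against. Your overall architecture --- reduce to Theorem \ref{th,dsgr1} by showing that the space is Desarguesian, satisfies the parallel postulate, and has strictly convex spheres --- is a reasonable guess at how such a proof must go, and several of your intermediate steps (simple connectedness plus nonpositive curvature makes the space straight, hence globally uniquely geodesic; closed geodesics and hence the $\mathbb{RP}^n$ alternative of Theorem \ref{th.RepDesargues} are excluded) do correspond to genuine theorems in \cite{Busemann1955}.

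However, two of the three hypotheses of Theorem \ref{th,dsgr1} are not actually established in your sketch. (i) You claim strict convexity of the spheres ``comes for free'' from the $G$-space axioms, citing the remark preceding Theorem \ref{th,dsgr1}; but that remark says only that a \emph{Minkowski} space is a $G$-space if and only if its unit ball is strictly convex --- it is not a statement about arbitrary $G$-spaces, and invoking it here is circular, since being Minkowskian is the conclusion you are after. What zero curvature gives directly is convexity of the distance function along geodesics, hence convex balls; upgrading this to \emph{strict} convexity needs a separate argument (via uniqueness of prolongation) that you have not supplied. (ii) The two-dimensional Desargues property and the parallel postulate are asserted rather than proved: ``flatness lets one reflect the configuration through midpoints \dots careful bookkeeping yields the Desargues incidence'' is not an argument, and this is precisely where the mathematical content of Busemann's theorem is concentrated. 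The workable route is to iterate the zero-curvature midpoint identity (in the spirit of Proposition \ref{prop.distpr}, as you note) to produce, for every point $p$ and every $\lambda>0$, a dilation of ratio $\lambda$ centred at $p$ that is a genuine similarity of the metric, and then to use this family of dilations and the translations they generate to build the affine structure synthetically; the Desargues configuration and the existence and uniqueness of parallels are consequences of that construction, not of generic ``flatness''. Until these two steps are carried out, the reduction to Theorem \ref{th,dsgr1} is incomplete.
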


 \medskip
 
Busemann came back several times to the problem of characterizing Min\-kowskian and locally Minkowskian spaces. In his paper with Phadke \cite{BP1979}, written 25 years after \cite{Busemann1955}, he gave sufficient conditions that are more technical but weaker than those of Theorem \ref{th,dsgr2}.

\section{Comparison and analogies between Minkowski gaometry and Funk and Hilbert geometries}

Given a Minkowski metric $\delta$ in $\r^n$ whose unit ball $\Omega$ at the origin is open and bounded,
the distance between two points is obtained by setting $\delta(x,x)=0$ for all $x$ in $\mathbb{R}^n$ and, for $x\not=y$, 
$$\displaystyle \delta(x,y)=\frac{\vert x-y\vert}{\vert 0-a^+\vert}$$
 where $\vert \ \vert$ denotes the Euclidean metric and the point  $a^+$ is the intersection with $\partial \Omega$ of  the ray starting at the origin $0$ of $\mathbb{R}^n$ and parallel to the ray $R(x,y)$ from $x$ to $y$. This formula is equivalent to (\ref{def.mk}) and it  suggest an analogy with the formula for the Funk distance in the domain $\Omega$ (see Definition 2.1 in the chapter \cite{PT_Funk}  of this volume). It is also in the spirit of the following definition of Busemann (\cite{Busemann1955}, Definition 17.1): \emph{A metric $d(x,y)$ in $\r^n$ is Minkowskian if for the euclidean metric $e(x,y)$ the distances $d(x,y)$ and $e(x,y)$ are proportional on each line.} 
   
 Minkowski metrics share several important properties of the Funk and the Hilbert metrics, and it is interesting to compare these classes of metrics. Let us quickly review some of the analogies. 
 
We start by recalling that in the formulation of Hilbert's fourth problem which asks for the construction and the study of metrics on subsets of Euclidean (or of projective) space for which the Euclidean segments are geodesics, the Minkowski and Hilbert metrics appear together as the two examples that Hilbert gives (see \cite{Hilbert-Problems} and the chapter \cite{Papadopoulos-Hilbert} in this volume).

A rather simple analogy between the Minkowski and the Funk geometries is that both metrics are uniquely geodesic if and only if their associated convex sets are strictly convex. (Here, the convex set associated to a Minkowski metric is the unit ball centered at the origin. The convex set associated to a Funk metric is the set on which this metric is defined.)

Another analogy between Minkowski and Hilbert geometries is the well known fact that a Minkowski weak metric on $\mathbb{R}^n$ is Riemannian if and only if the associated convex set is an ellipsoid, see Proposition \ref{prop.round}. This fact is (at least formally) analogous to the fact that the Hilbert geometry of an open bounded convex subset of $\mathbb{R}^n$ is Riemannian if and only if the convex set is an ellipsoid (see 
 \cite{Kay} Proposition 19).
  
As a further relation between Minkowski and Hilbert geometries, let us recall a result obtained by Nussbaum, de la Harpe, Foertsch and Karlsson. Nussbaum and de la Harpe proved (independently) in \cite{Nussbaum1988} and \cite{Harpe} that if $\Omega\subset\mathbb{R}^n$ is the interior of the standard $n$-simplex and if $H_{\Omega}$ denotes the associated Hilbert metric, then the metric space $(\Omega,H_{\Omega})$ is isometric to a Minkowski metric space. Foertsch and Karlsson  proved the converse in \cite{FK}, thus completing the proof of the fact that a bounded open convex subset $\Omega$ of $\mathbb{R}^n$ equipped with its Hilbert metric is isometric to a Minkowski space if and only if $\Omega$ is the interior of a simplex.

It should be noted that the result (in both directions) was already known to Busemann since 1967. In their paper \cite{BP1984}, p. 313, Busemann and Phadke write the following, concerning the simplex: 

\begin{quote}\small
The case of general dimension $n$ is most interesting. The (unique) Hilbert geometry possessing a transitive abelian group of motions where the affine segments are the chords (motion means that both distance and chords are preserved) is given by a simplex $S$, (\cite{B1967} p. 35). If we realize $\mathcal{I}$ [the interior of the simplex] as the first quadrant $x_i>0$ of an affine coordinate system, the group is given by $x'_i=\beta_i x_i$, $\beta_i >0$ [...] $m$ is a Minkowski metric because it is invariant under the translations and we can take the affine segments as chords". 
\end{quote}

We finally mention the following  common characterizations of Minkowski-Funk geometries
and of Minkowski-Hilbert geometries:
 
\begin{theorem}[Busemann  \cite{Busemann1970}]\label{Busemann-iso}
 Among noncompact and nonnecessarily symmetric Desarguesian space in which all the right and left spheres of positive radius around any point are compact, the Hilbert and Minkowski geometries are characterized by the property that any isometry between two (distinct or not) geodesics is a projectivity. 
 \end{theorem}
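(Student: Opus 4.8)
The plan is to reduce to Busemann's representation theorem for Desarguesian spaces and then to read off the metric from a classification of one-parameter subgroups of $\mathrm{PGL}(2,\r)$.

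\emph{Reduction and the easy implication.} Since $(X,d)$ is noncompact, alternative (1) of Theorem \ref{th.RepDesargues} cannot occur ($\mathbb{RP}^n$ is compact), so $X$ is homeomorphic to a convex domain $\mathcal{C}\subseteq\r^n$ on which $d$ is a projective metric: the geodesics are exactly the nonempty intersections $\ell=L\cap\mathcal{C}$ of affine lines $L$ with $\mathcal{C}$, and $d$ is additive along each such chord. The hypothesis that all right and left spheres of positive radius are compact, together with finite compactness, says that $(\mathcal{C},d)$ is boundedly compact; in particular each geodesic is a complete metric line and, being additive, is isometric to $(\r,|\cdot|)$. For the easy direction, if $d$ is a Minkowski metric on $\r^n$ then $d$ is proportional to the Euclidean distance on every line, so an isometry between two lines is, in arc-length coordinates, a map $s\mapsto\pm s+b$, i.e.\ an affine bijection of one line onto the other, hence a projectivity; if $d$ is the Hilbert metric of a bounded convex $\mathcal{C}$, the arc-length along each chord is half the logarithm of the cross-ratio with the two endpoints, an isometry between two chords is again $s\mapsto\pm s+b$, which becomes $w\mapsto\lambda w$ or $w\mapsto\lambda/w$ in the cross-ratio coordinate, again a projectivity.

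\emph{Forward direction: classifying each geodesic.} Fix a geodesic $\ell$ with $d$-arc-length parameter $t\in\r$ and, for $c\in\r$, let $\sigma_c\colon\ell\to\ell$ be the shift $t\mapsto t+c$, a self-isometry of $\ell$. By hypothesis each $\sigma_c$ extends to a projectivity of the projective line $\overline{L}\cong\mathbb{RP}^1$ containing $\ell$, so $\{\sigma_c\}_{c\in\r}$ is a one-parameter subgroup of $\mathrm{PGL}(2,\r)$ whose induced action on the open arc $\ell\subset\mathbb{RP}^1$ is free and proper (being conjugate to translation of $\r$). This rules out the elliptic one-parameter subgroups, whose orbits are compact, so $\{\sigma_c\}$ is parabolic or hyperbolic. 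If it is parabolic, its unique fixed point $p$ lies outside $\mathcal{C}$, the arc $\ell$ is the whole affine line $\overline{L}\setminus\{p\}$, so $L\subset\mathcal{C}$, and in an affine coordinate $\sigma_c$ is $x\mapsto x+\alpha(c)$ with $\alpha$ additive and continuous, hence linear: $d|_\ell$ is proportional to the affine length. If it is hyperbolic, its two fixed points are the endpoints of the chord $\ell$ (one possibly at infinity), and $d|_\ell$ is a constant multiple of the logarithm of the ratio (or cross-ratio) of $x$ with these fixed points. Thus every geodesic is of \emph{Minkowski type} (a full affine line in $\mathcal{C}$, with affine arc-length) or of \emph{Funk--Hilbert type} (a proper chord, with a logarithmic arc-length).

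\emph{Forward direction: gluing and eliminating the Funk case.} It remains to assemble this data into one metric and to discard the non-Hilbert, non-Minkowski possibilities. Applying the projectivity hypothesis to isometries between \emph{distinct} geodesics, and using the Desargues structure of $\mathcal{C}$ to transport arc-length densities from one chord to another, one shows that the proportionality constants fit together coherently. If some geodesic is of Minkowski type, then $\mathcal{C}$ contains an affine line, and the compactness of all right and left spheres forces $\mathcal{C}=\r^n$ with every geodesic of Minkowski type, so that $d$ is a finite, boundedly compact, projective weak metric proportional to the Euclidean length on every line; it then satisfies the midpoint property and is continuous on each line, so Busemann's characterization proved above (the theorem following Proposition \ref{prop.distpr}, via the distance-ratio identity (\ref{eq.distratio})) shows $d$ is a weak Minkowski metric, with bounded unit ball by bounded compactness; i.e.\ $d$ is a Minkowski metric. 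Otherwise every geodesic is of Funk--Hilbert type; an unbounded $\mathcal{C}$ or a genuinely one-sided metric would produce a forward or backward Funk-type ball that is not relatively compact, contradicting the hypothesis, so $\mathcal{C}$ is bounded and the per-chord logarithmic densities are exactly those of the cross-ratio: $d$ is the Hilbert metric of $\mathcal{C}$.

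\emph{Main obstacle.} The genuinely hard part is the gluing of the third paragraph: passing from the ``one-parameter group is projective along each geodesic'' data to a coherent global conclusion requires using the projectivity hypothesis for isometries between \emph{different} geodesics together with the Desargues property in an essential way, rather than only the self-shifts used in the classification; and one must check carefully that the compactness of all left and right spheres is precisely what rules out the Funk metrics (including the Funk metric of a bounded convex set, whose backward balls are not relatively compact) and the intermediate unbounded domains.
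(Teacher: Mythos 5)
First, a point of comparison: the paper does not prove this theorem. It is quoted from Busemann's monograph \cite{Busemann1970} in the concluding survey section, so there is no in-paper argument to measure yours against; your proposal has to stand on its own. Its architecture (reduce via Theorem \ref{th.RepDesargues} to a projective metric on a convex domain $\mathcal{C}$, classify the metric chord by chord, then glue) is reasonable, and the easy direction is essentially right. But the central classification step has a genuine gap. You build a one-parameter group $\{\sigma_c\}$ of arclength shifts of a geodesic and feed it into $\mathrm{PGL}(2,\r)$; this presupposes that the shifts are self-isometries. For a reversible metric they are, but the theorem explicitly allows nonsymmetric Desarguesian spaces, and there a forward-arclength shift need not preserve the backward distances. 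The Funk metric is the decisive example: on a chord with endpoints $a,b$ the forward distance is $\log(|x-b|/|y-b|)$ while the backward distance is $\log(|y-a|/|x-a|)$, and the forward shifts (affine contractions toward $b$) do not preserve the latter, so a Funk chord admits no nontrivial self-isometry at all. The hypothesis is conditional --- it constrains isometries that exist, it does not manufacture them --- so for such chords your dichotomy ``Minkowski type or Funk--Hilbert type'' is not established, and these are exactly the chords you most need to control (one can check that isometries between Funk chords are affine maps, hence projectivities, so the Funk metric genuinely satisfies the hypothesis and is excluded only by the compactness of left spheres, as you note at the end).

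Second, the step you yourself flag as the ``main obstacle'' cannot be deferred: the solution space of Hilbert's fourth problem is enormous, so a projective metric all of whose chords carry an affine-type or log-type density is still very far from being Minkowski or Hilbert --- the per-chord speed can vary from chord to chord, and in the nonsymmetric case each chord carries two independent constants (forward and backward weights on the two endpoints), with Hilbert corresponding to equal weights and Funk to a single endpoint. Showing that the isometries-between-distinct-geodesics hypothesis forces these data to be coherent and of the Hilbert (or Minkowski) form is where the substance of Busemann's theorem lies, and the proposal does not contain that argument. Finally, the claim that the presence of one Minkowski-type geodesic forces $\mathcal{C}=\r^n$ ``by compactness of spheres'' is misattributed: what rules out mixing affine-type and log-type chords is again the isometry hypothesis (in the reversible case both chord types are isometric to $\r$, so an isometry between them exists and is visibly not a projectivity), not sphere compactness.
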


 \begin{theorem}[Busemann  \cite{Busemann-homothetic}]\label{Busemann-homot}
 A Desarguesian space in which all the right spheres of positive radius around any point are homothetic is either a Funk space or a Minkowski space.
 \end{theorem}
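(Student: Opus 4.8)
The plan is to use Busemann's representation theorem to reduce to a convex domain in $\r^n$ carrying a projective metric, to read off from the homothety hypothesis a ``radial self-similarity'' of the functions $x\mapsto d(p,x)$, and finally to let projectivity force a functional equation whose only admissible solutions yield the Minkowski and the Funk distances. First I would apply Theorem~\ref{th.RepDesargues}. Its projective alternative, in which $X\cong\mathbb{RP}^n$ with geodesics going to projective lines, is excluded by the hypothesis: there the right balls $\overline{B^+(p,r)}=\{x:d(p,x)\leq r\}$ eventually ``wrap around'' and are not contained in any affine chart, so two spheres $S^+(p,r)$, $S^+(p,r')$ of very different radii do not even lie in a common affine space and the relation of being homothetic is vacuous. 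Hence $X$ is (homeomorphic to) a convex domain $\mathcal{C}\subset\r^n$ and every geodesic is $\mathcal{C}$ intersected with an affine line.

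Next, fix $p\in X$. Because $d$ is a genuine metric, $\bigcap_{r>0}\overline{B^+(p,r)}=\{p\}$, and by finite compactness each $\overline{B^+(p,r)}$ is compact. The crucial point is to show that the homothety carrying the right sphere $S^+(p,r)$ onto $S^+(p,r')$ is centered at $p$. Writing the homothety $S^+(p,r)\to S^+(p,\varepsilon)$ as $x\mapsto\mu_\varepsilon x+v_\varepsilon$, the fact that the balls are nested and shrink to $\{p\}$ gives $\mu_\varepsilon\to 0$ and $v_\varepsilon\to p$ as $\varepsilon\to 0$, so its center tends to $p$; a further argument --- using the hypothesis at \emph{every} point and the continuity of $d$, so that the homothety centers depend continuously on the data --- upgrades this to the statement that all these centers are equal to $p$. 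Fixing a reference radius $r_0$ and setting $\Sigma_p:=S^+(p,r_0)$ we then obtain
$$
  S^+(p,r)=p+\psi_p(r)\,(\Sigma_p-p),
$$
with $\psi_p:(0,\infty)\to(0,\infty)$ continuous, strictly increasing, $\psi_p(r_0)=1$, $\psi_p(0^+)=0$; equivalently $d(p,x)=G_p\bigl(F_p(x-p)\bigr)$, where $F_p$ is the Minkowski functional of the convex body $\overline{B^+(p,r_0)}-p$ and $G_p=\psi_p^{-1}$.

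Now I would exploit projectivity. Along a geodesic ray from $p$ in a fixed direction, additivity of $d$ at an intermediate point, together with the elementary identity between $F_q$ and $F_p$ imposed by the self-similarity of the previous step, transforms the additivity relation into a Cauchy/Abel-type functional equation for $G_p$ (equivalently for $\psi_p$). Continuity and connectedness first show that $\psi_p$ is independent of $p$; the continuous, strictly increasing solutions of the equation with $\psi(0^+)=0$ are then, up to an affine change of variable, $\psi(r)=r$, $\psi(r)=1-e^{-\lambda r}$ with $\lambda>0$, and $\psi(r)=e^{\lambda r}-1$ with $\lambda>0$. The last one is discarded: the resulting $d$ is finite but grows only logarithmically along rays, which is incompatible with its being a genuine, finite-compact, symmetric metric. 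If $\psi(r)=r$, then $d(p,x)=r_0\,F_p(x-p)$, and a further argument (exploiting the functional equation and the homothety hypothesis at all points) forces $d$ to be translation invariant: a weak Minkowski metric. If $\psi(r)=1-e^{-\lambda r}$, then $d(p,x)=-c\log\bigl(1-F_p(x-p)\bigr)$ for a constant $c>0$; here one recognizes $\overline{B^+(p,r)}=p+(1-e^{-\lambda r})(\Omega-p)$ for a single bounded convex domain $\Omega$ (the common ``boundary at infinity'' of the spheres), and hence $d$ as a constant multiple of the Funk metric of $\Omega$.

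The step I expect to be the main obstacle is the claim, in the second paragraph, that the homotheties are centered at $p$. As a purely affine statement about nested homothetic convex bodies shrinking to a point it is simply \emph{false} --- in dimension one every family of nested intervals is ``homothetic'', yet such a family need not come from a Minkowski or a Funk metric --- so the hypothesis must genuinely be used globally, at all points and together with the continuity of $d$; this is where projectivity and the Desarguesian structure are indispensable. A secondary difficulty is to carry the functional-equation analysis uniformly in the base point and the direction (in particular to see that the Minkowski/Funk alternative is the same at every point), to deduce translation invariance in the first case, and, in the Funk case, to glue the locally defined convex bodies $\Sigma_p$ into the single domain $\Omega$.
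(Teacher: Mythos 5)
First, a point of comparison: the paper does not prove this statement at all --- it is quoted as a theorem of Busemann with a reference to his 1974 article \emph{Spaces with homothetic spheres}, so there is no in-paper argument to measure your proposal against. Judged on its own, your outline correctly identifies the natural skeleton (reduce via Theorem \ref{th.RepDesargues} to a convex domain in $\r^n$ with a projective metric, encode the hypothesis as $S^+(p,r)=p+\psi_p(r)(\Sigma_p-p)$, and let projectivity along rays turn additivity of $d$ into a functional equation whose admissible solutions are $\psi(r)=r$ and $\psi(r)=1-e^{-\lambda r}$, i.e.\ the Minkowski and Funk profiles). That dichotomy is indeed the right answer, and the exclusion of the $\mathbb{RP}^n$ alternative is essentially correct since homothety is an affine notion that cannot be satisfied by spheres of all radii when geodesics are closed.

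The genuine gap is the one you flag yourself, and it is not a technicality but the actual content of the theorem: nothing in your argument establishes that the homothety carrying $S^+(p,r)$ onto $S^+(p,r')$ is centered at $p$, nor that the shapes $\Sigma_p$ and profiles $\psi_p$ cohere as $p$ varies. The limit statement ``$\mu_\varepsilon\to 0$ and $v_\varepsilon\to p$, hence the center tends to $p$'' does not yield equality of the centers with $p$ for any fixed pair of radii, and your own one-dimensional example shows the purely affine claim is false; invoking ``continuity of the centers in the data'' is not an argument, because a continuously varying family of centers can perfectly well avoid $p$. Until this is fixed, the reduction $d(p,x)=G_p\bigl(F_p(x-p)\bigr)$ --- the starting point of the functional-equation analysis --- is unjustified. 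The same applies to the two closing assertions: that $\psi(r)=r$ forces translation invariance of $d$ (rather than merely a norm-like distance from each base point separately), and that in the Funk case the bodies $\Sigma_p$ glue into a single bounded convex domain $\Omega$ serving as common boundary at infinity. Each of these requires playing the homothety hypothesis at \emph{pairs} of base points against projectivity along the line joining them (this is where Busemann's actual work lies), and as written the proposal replaces that work with statements of what must be proved. So the outline is a correct reduction of the theorem to its hardest steps, not a proof.
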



\printindex
 

\begin{thebibliography}{10}
%
\bibitem{Barvinok} A. Barvinok,
\emph{A course in convexity},  
Graduate Studies in Mathematics, 54. American Mathematical Society, Providence, RI, 2002.

\bibitem{BHR}  V.N. {Berestovski\u{\i}},  D. M. {Halverson} and D. {Repov\v{s}},
    {Locally $G$-homogeneous Busemann $G$-spaces},
   {Differ. Geom. Appl.},  {\bf 29} 3,  299-318 (2011).
 

\bibitem{Busemann1950} H. Busemann,   
The foundations of Minkowskian geometry,
\emph{Comment. Math. Helv.} 24, (1950). 156–187. 

\bibitem{Busemann1955} H. Busemann,   
\emph{The geometry of geodesics},  Academic Press  (1955), reprinted by Dover in 2005.



\bibitem{B1967} H. Busemann,
Timelike spaces. \emph{Diss. Math.} 53, 52 p. Warsaw, 1967.

\bibitem{Busemann1970} 
H. Busemann, \emph{Recent synthetic differential geometry}, Ergebnisse der Mathematik und ihrer Grenzgebiete, \textbf{54},  Springer-Verlag, 1970.
   

\bibitem{BP1984} H. Busemann and  B. B. Phadke, A general version of Beltrami's theorem in the large.  
  \emph{Pac. J. Math.} 115  (1984), 299-315. 
  


\bibitem{BP1979} H. Busemann and  B. B. Phadke, Minkowskian geometry, convexity conditions and the parallel axiom, \emph{J. Geometry} vol. 12/1, (1979) p. 17-33.

\bibitem{BP1993} H. Busemann and  B. B. Phadke, Novel results in the geometry of geodesics, \emph{Adv. in Math.} 101 (1993), 180-219.



\bibitem{Busemann-homothetic} 
H. Busemann, Spaces with homothetic spheres.  \emph{J. Geometry} 4  (1974), 175-186.


 \bibitem{Cobzas}   S. Cobzas, 
Functional analysis in asymmetric normed spaces. 
Frontiers in Mathematics. Birkhäuser/Springer Basel AG, Basel, 2013.


\bibitem{Day} M. M. Day, Normed linear spaces, 3rd ed., Ergebnisse der Mathematik und ihre Grenzgebiete, Band 21, Springer-Verlag, New York, 1973.

\bibitem{FK} T. Foertsch, and A. Karlsson,  
Hilbert metrics and Minkowski norms,
  \emph{J. Geom.} 83, No. 1-2  (2005), 22-31.  


 \bibitem{Harpe} P. de la Harpe, On Hilbert's metric for simplices, in: Niblo, Graham A. (ed.) et al., \emph{Geometric group theory},  Vol. 1, Proceedings of the symposium held at the Sussex University, Brighton,  Lond. Math. Soc. Lect. Note Ser. 181, 97-119, Cambridge University Press, Cambridge, 1993.


\bibitem{Hilbert-Problems}  D. Hilbert,  Mathematische Probleme, \emph{G\"ottinger Nachrichten}, 1900, pp. 253--297, reprinted in {\it Archiv der Mathematik und Physik}, 3d. ser., vol. 1 (1901) pp. 44--63 and 213--237. English version, ``Mathematical problems", translated by M. Winston Newson, \emph{Bulletin of the AMS}, vol. 8, 1902, pp. 437-- 445 and 478--479. The English translation was also reprinted in  ``Mathematical developments arising from Hilbert problems", Proceedings of Symposia in Pure Math., Vol. XXVII, Part 1, F. Browder (Ed.), AMS, Providence, Rhode Island, 1974. Reprinted also in
 the  Bull. Amer. Math. Soc. (N.S.) 37 (2000), no. 4, 407-436. French edition, \emph{Sur les probl\`emes futurs des math\'ematiques}, 1902, trad. L. Laugel.




\bibitem{Kay} D. C. Kay, 
The Ptolemaic inequality in Hilbert geometries.  
\emph{Pac. J. Math.} 21  (1967), 293-301.



\bibitem{Nica} B. Nica,
 The Mazur-Ulam theorem.  
\emph{Expo. Math.} 30 no. 4  (2012), 397-398.  

 
 \bibitem{Nussbaum1988} Nussbaum, R. D., Hilbert's projective metric and iterated nonlinear maps, \emph{Mem. Am. Math. Soc.} 391, (1988). 
 

\bibitem{Martini2001}  H. Martini, K.  Swanepoel, G. Weiss,
 The geometry of Minkowski spaces -- a survey. I, \emph{Expo. Math.} 19, no. 2 (2001), 97-142.


\bibitem{Martini2007}  H. Martini, M. Spirova, 
 Recent results in Minkowski geometry. 
\emph{East-West J. Math.}, Special Vol. (2007), 59-101. 



\bibitem{Minkowski1896} H. Minkowski, Sur les propri\'et\'es des nombres entiers qui sont d\'eriv\'ees de l'intuition de l'espace, \emph{Nouvelles annales de math\'ematiques}, 3e s\'erie, 15, 1896.


\bibitem{Minkowski} H. Minkowski,  
\emph{Geometrie der Zahlen}, B. G. Teubner, Leipzig and Berlin, 1910.

\bibitem{Papadopoulos-Hilbert} A. Papadopoulos, On Hilbert's Fourth Problem, Handbook of Hilbert geometry, (ed. G. Besson, A. Papadopoulos and M. Troyanov), European Mathematical Society, Z\"urich, 2014.

\bibitem{PT_Funk} A. Papadopoulos and M. Troyanov, A survey of Funk geometry. In: \emph{Handbook of Hilbert geometry}, 
(ed. G. Besson, A. Papadopoulos and M. Troyanov), European Mathematical Society, Z\"urich, 2014.


\bibitem{Ribeiro} H.  Ribeiro, 
 Sur les espaces à m\'etrique faible.
\emph{Porugaliae Math.} 4, (1943) 21-40.

 \bibitem{Thompson} A. C. Thompson,  \emph{Minkowski geometry}. 
 Encyclopedia of Mathematics and its Applications, 63. Cambridge University Press, Cambridge, 1996.
 \bibitem{Varga}  O. Varga, Zur Begründung der Minkowskischen Geometrie.
\emph{Acta Univ. Szeged. Sect. Sci. Math.} 10 (1943), 149-163.


\end{thebibliography}
\end{document}